\numberwithin{equation}{section}
\newtheorem{theorem}{Theorem}[section]
\newtheorem*{maintheorem}{Main Result}
\newtheorem{lemma}[theorem]{Lemma}
\newtheorem{proposition}[theorem]{Proposition}
\newtheorem{corollary}[theorem]{Corollary}
\theoremstyle{definition}
\newtheorem{example}[theorem]{Example}
\newtheorem{remark}[theorem]{Remark}
\newtheorem{definition}[theorem]{Definition}
\newcommand{\be}{\begin{equation}}
\newcommand{\ee}{\end{equation}}
\newcommand{\bes}{\begin{equation*}}
\newcommand{\ees}{\end{equation*}}
\newcommand{\cH}{\mathcal{H}}
\newcommand{\cK}{\mathcal{K}}
\newcommand{\cL}{\mathcal{L}}
\newcommand{\cU}{\mathcal{U}}
\newcommand{\cW}{\mathcal{W}}
\newcommand{\bC}{\mathbb{C}}
\newcommand{\bN}{\mathbb{N}}
\newcommand{\bQ}{\mathbb{Q}}
\newcommand{\bS}{\mathbb{S}}
\newcommand{\bT}{\mathbb{T}}
\newcommand{\bZ}{\mathbb{Z}}
\newcommand{\ep}{\varepsilon}
\newcommand{\eps}{\varepsilon}
\newcommand{\ol}{\overline}
\newcommand{\id}{\operatorname{id}}
\newcommand{\spn}{\operatorname{span}}
\newcommand{\distance}{\operatorname{d}}
\begin{document}

\title{Dilation distance and the stability of ergodic commutation relations}

 \author{Malte Gerhold}
 \address{M.G., Department of Mathematics, Saarland University, D-66123 Saarbrücken, Germany}
 \email{gerhold@math.uni-sb.de}
\urladdr{\url{https://sites.google.com/view/malte-gerhold/}\newline
  \href{https://orcid.org/0000-0003-4029-1108}{{\orcidlink{}}\,\url{https://orcid.org/0000-0003-4029-1108}}}

 \author{Orr Moshe Shalit}
 \address{O.S., Faculty of Mathematics\\
 Technion - Israel Institute of Technology\\
 Haifa\; 3200003\\
 Israel}
 \email{oshalit@technion.ac.il}
 \urladdr {\href{https://oshalit.net.technion.ac.il/}{\url{https://oshalit.net.technion.ac.il/}}\newline
\href{https://orcid.org/0000-0002-5390-6928}{{\orcidlink{}}\,\url{https://orcid.org/0000-0002-5390-6928}}}


 \thanks{The work of M.~Gerhold was supported by the German Research Foundation (DFG) grant no 465189426;
it was carried out as a postdoctoral researcher at Saarland University, during the tenure of an ERCIM “Alain
Bensoussan” Fellowship Programme at NTNU Trondheim, and as a guest researcher at Saarland University in the scope
of the SFB-TRR 195.
}
\thanks{The work of O.M.~Shalit is partially supported by ISF Grant no.\ 431/20. }

 \subjclass[2020]{47A20 (Primary) 47A13, 46L07 (Secondary)}
 \keywords{Dilations, matrix range, almost $q$-commuting, noncommutative tori}

 \addcontentsline{toc}{section}{Abstract}

\begin{abstract}
We revisit and generalize the notion of {\em dilation distance} $\distance_{\mathrm{D}}(u,v)$ between unitary tuples and study its relation to the natural Haagerup-R{\o}rdam distance $\distance_{\mathrm{HR}}(u,v) = \inf\{\|\pi(u) - \rho(v)\|\}$, where the infimum is taken over all pairs of faithful representations $\pi \colon C^*(u) \to B(\cH)$, $\rho \colon C^*(v) \to B(\cH)$.
We show that $\distance_{\mathrm{HR}}(u,v)\leq 10\distance_{\mathrm{rD}}(u,v)^{1/2}$, where $\distance_{\mathrm{rD}}(u,v)$ is a relaxed dilation distance, improving and extending earlier results. 
For an antisymmetric matrix $\Theta$, we show via a concrete dilation construction that a tuple of unitaries $u$ that almost commutes according to $\Theta$ (i.e., $\|u_\ell u_k - e^{i \theta_{k,\ell}} u_k u_\ell\|$ is small) can be nearly dilated to a tuple of unitaries $v$ that commutes according to $\Theta$ (i.e., $v_\ell v_k - e^{i \theta_{k,\ell}} v_k v_\ell = 0$). 
We show that the dilation can be ``reversed'' by a second application of the dilation construction, which leads to a rotated version of the original tuple. 
Thus, a gauge invariant almost $\Theta$-commuting unitary tuple can be approximated (in some faithful representation) by a $\Theta$-commuting unitary tuple. 
Moreover, when $\Theta$ is ergodic, a $\Theta$-commuting tuple is shown to be {\em almost} gauge invariant, and it follows from the results above that these can be approximated in norm by $\Theta$-commuting tuples. 
In particular, we obtain the following counterpart of Lin's theorem on almost commuting unitaries: if $q \in \bT$ is {\em not} a root of unity, then for every $\eps >0$ there exists $\delta > 0$ such that for every pair of unitaries $u_1,u_2 \in B(\cH)$ for which $\|u_1 u_2 - qu_2 u_1\| < \delta$, there exists two $q$-commuting unitaries $v_1, v_2 \in B(\cH \otimes \ell^2)$ such that $\|v_i - u_i \otimes 1\| < \eps$ ($i=1,2$). 
\end{abstract}

\maketitle

\section{Introduction}\label{sec:introduction}

\subsection{Background}

A fundamental question, which must have occurred to anyone who has ever solved a mathematical problem in practice (i.e., needed to get the numbers right), is: 

\vskip 5pt

\begin{center} {\em Given an object that approximately solves a problem, is it approximately a solution, that is, is this object close to a true solution of the problem?}
\end{center}

\vskip 5pt
A specific version of this question was answered by Hyers \cite{Hyers}, who showed that for every $\eps > 0$ there exists a $\delta > 0$ such that whenever $f \colon X \to Y$ is a map between Banach spaces $X$ and $Y$ such that 
\be\label{eq:stability}
\|f(x_1 + x_2) - f(x_1) - f(x_2) \| < \delta \quad, \quad \textrm{ for all } x_1, x_2 \in X
\ee
there exists a map $A \colon X \to Y$ which is {\em additive} (that is, $A(x_1 + x_2) = A(x_1) + A(x_2)$)  such that 
\[
\|f(x) - A(x) \| < \eps \quad, \quad \textrm{ for all } x \in X; 
\]
(in fact, in this setting $\delta = \eps$ works, but the $\delta,\eps$ formulation is better suited for generalization). 
This is one instance of a classical problem of Ulam \cite{Ulam}, where $X$ and $Y$ are replaced by some group and in \eqref{eq:stability} the operation `$+$' is replaced by group multiplication and the norm is replaced by some metric on the group. 
Roughly, Ulam asked whether or not every almost-homomorphism between metric groups is close to a true homomorphism. 
This problem, which is briefly referred to as {\em stability}, has attracted much attention through the years; 
for example, in the paper \cite{Kazhdan82} Kazhdan showed that every amenable group $G$ is {\em stable}, in the sense that for every $\eps$ there exists $\delta$ such that every $\delta$-almost homomorphism from $G$ into a unitary group is $\eps$-close to a true unitary representation. Moreover, Kazhdan showed  that the (noncommutative) free groups are not stable. 

An operator theoretic variation on this theme was suggested by Halmos \cite{Halmos_Open}, who asked whether for every $\eps$ there exists a $\delta$ such that for every pair of selfadjoint matrices (of arbitrary {\em finite} size) $A,B$, if $A$ and $B$ {\em $\delta$-almost commute}, in the sense that $\|AB - BA\|<\delta$, then $A$ and $B$ are {\em $\eps$-close to commuting selfadjoints}, in the sense that there exist commuting selfadjoint matrices $A'$ and $B'$, such that $\|A - A'\| < \eps$ and $\|B - B'\|< \eps$. 
This was solved affirmatively (for matrices) by Lin \cite{Lin96almost}; for pairs of  selfadjoint operators on an infinite dimensional space the answer is in general negative, as it is negative for three or more selfadjoint matrices \cite{davidson1985almost}. 

There are interesting variations on Halmos's question. 
One can ask the same question but replace the word ``selfadjoint" with ``unitary": {\em is it true that for every $\eps$ there exists $\delta$ such that every pair of $\delta$-almost commuting unitaries is $\eps$-close to a pair of commuting unitaries?}
Voiculescu showed that even if one restricts attention to matrices  then the answer to this question is: no \cite{Voiculescu1983} (see \cite{EL89} for a very elegant proof of this fact). 
Note that this does not contradict Kazhdan's result mentioned above, since having a pair of $\delta$-almost commuting unitaries is a significantly weaker assumption then having a $\delta$-almost homomorphism from the amenable group $\bZ^2$ into a unitary group (in the latter case, the analogue of \eqref{eq:stability} has to hold for every two elements in the group, not just the generators). 
Still, we think of the circle of problems coming out of Halmos's question as part of the broad stability perspective.

\subsection{Objective}

In this paper we attack another variant of the stability, in which the question is whether a tuple of unitaries that almost satisfies a certain relation has a $*$-isomorphic copy that is close to a tuple that truly satisfies the relations. 
In more detail, the problem we solve is as follows. 

Recall that for a real and antisymmetric $d\times d$ matrix $\Theta=(\theta_{k,\ell})$ and a unitary tuple $U = (U_1, \ldots, U_d)$, we say that $U$ is {\em $\Theta$-commuting}, or that $U$ {\em commutes according to $\Theta$}, if 
\be\label{eq:Theta_commuting}
U_\ell U_k=e^{i\theta_{k,\ell}}U_k U_\ell,
\ee
for all $k,\ell = 1, \ldots, d$. 
Alternatively, we put $Q = (\exp(i\theta_{k,\ell}))$, and we say that {\em $U$ is $Q$-commuting}. 
For every $\Theta$ as above there exists a {\em universal} $\Theta$-commuting unitary tuple $u_\Theta$, characterized by the property that for every $\Theta$-commuting unitary tuple $U$ there exists a $*$-homomorphism $\pi \colon C^*(u_\Theta) \to C^*(U)$ such that $\pi(u_{\Theta}) = U$. 

We say that $\Theta$ is {\em ergodic} if the columns of $Q$ generate a dense subgroup of $\bT^d$. 
Thus, for example, if $d = 1$ and $Q = \left(\begin{smallmatrix} 1 & q \\ \bar q & 1 \end{smallmatrix} \right)$, then $\Theta$ is ergodic if and only if $q$ is not a root of unity. 

Given $\delta > 0$ and a matrix $\Theta$ as above, we say that $U \in \cU(d)$ is {\em $\delta$-almost $\Theta$-commuting} if 
\be\label{eq:del_almost_T_comm}
\|U_\ell U_k - e^{i\theta_{k,\ell}}U_k U_\ell\| \leq \delta , 
\ee
for all $k,\ell = 1, \ldots, d$.

\begin{maintheorem}[Theorem \ref{thm:main2} below]\label{thm:mainintro}
Let $\Theta$ be an ergodic real antisymmetric $d \times d$ matrix. 
For every $\ep > 0$, there exists $\delta > 0$, such that for every $\delta$-almost commuting unitary tuple $U$ on $\cH$ there exists a $\Theta$-commuting unitary tuple on $\cH \otimes \ell^2$ such that 
\[
\|U \otimes \mathrm{id}_{\ell^2} - V_\Theta\| < \ep .
\]
\end{maintheorem} 

This theorem is a counterpart of Lin's result \cite[Theorem 2.5]{Lin98} that, if $u$ and $v$ are almost commuting unitaries, then their infinite ampliations $u \otimes \mathrm{id}_{\ell^2}$ and $v \otimes \mathrm{id}_{\ell^2}$ are near commuting unitaries, that is, Lin's result is the above statement for $d = 1$ and $\Theta = 0$. 
For $d>2$, the $\Theta = 0$ analogue of Theorem \ref{thm:mainintro} follows from the main result in the recent paper \cite{lin2024almost}. 
It is interesting to mention that stability results for the case $\Theta \neq 0$ have already been obtained under certain assumptions; see \cite{HuaLin2015,HuaWang2021}. 

The reader might wonder about the infinite ampliation appearing in the theorem. 
The first thing to say here is that it is necessary: the analogous statement is false with the ampliation removed, as can be seen by considering rational rotations tending towards an irrational one. 
The second comment is that this theorem solves the problem we posed, of approximating an almost $\Theta$-commuting tuple by a $\Theta$-commuting tuple {\em up to isomorphism}. 
It is interesting that in some cases our approximation result can be bootstrapped to provide approximation without ampliation; see Example \ref{ex:standard_q}. 
Finally, we note that approximation up to isomorphism (or up to infinite ampliation) is of interest and may lead to striking consequences, see for example \cite{HR95}.

Theorem \ref{thm:mainintro} falls within a broader long term project of applying dilation techniques to stability questions, building on and refining our work from \cite{GPSS21,GS20,GS23}. 
We now turn to describing our framework.

\subsection{Distances on the space of unitary tuples}\label{subsec:distances} 

At this point we begin the technical discussion, and introduce the various measurements of ``closeness" to be used in our investigation of stability. 
The metrics that we define below will be shown in the next section to be equivalent in an explicitly quantitative sense (see Theorem \ref{thm:dHRleqdD}); this will be key in obtaining the main result. 

For tuples $u = (u_1, \ldots, u_d),v = (v_1, \ldots, v_d)\in B(\cH)^d$, we define
\[
\|u - v\| = \max\{\|u_i - v_i\| \colon i=1, \ldots, d\}. 
\]
If $u$ and $v$ are two $d$-tuples of unitaries, then we say that $u$ and $v$ are {\em equivalent}, and we write $u \sim v$, if there is a $*$-isomorphism $\phi \colon C^*(u) \to C^*(v)$ such that $\pi(u) = v$, by which we mean that $\pi(u_i) = v_i$ for all $i=1, \ldots, d$. 
Let $\cU(d)$ denote the space of all equivalence classes of $d$-tuples of separably acting unitaries. 
We shall sometimes identify a tuple with the equivalence class it belongs to. 

In \cite{GPSS21}, we considered three distance functions on the space $\cU(d)$. 
Perhaps the most natural way to measure a distance between $u, v \in \cU(d)$ is given by the following distance function which we call
{\em the Haagerup-R{\o}rdam-distance}:
\[
\distance_{\mathrm{HR}}(u,v):=\inf\left\{\left\|u'-v'\right\|\colon u', v' \in B(\cH)^d,  u \sim u' \textrm{ and } v \sim v'\right\}. 
\]
It is worth pointing out that that $\distance_{\mathrm{HR}}(u,v) = 0$ if and only if $u \sim v$ (see \cite[Proposition 2.3]{GPSS21}; alternatively this can be shown directly be constructing a representation into an appropriate direct sum).

Another distance function is given by the {\em matrix range distance}, defined by 
\[
\distance_{\mathrm{mr}}(u,v):=\distance_{\mathrm H}\bigl(\cW(u),\cW(v)\bigr) .
\]
Here, the matrix range of a tuple $A \in B(\cH)^d$ is given by $\cW(A) = \sqcup_{n=1}^\infty \cW_n(A)$ where
\[
\cW_n(A) = \{\phi(A) \colon \operatorname{UCP}(C^*(A), M_n)\},
\]
and $\distance_{\mathrm H}\bigl(\cW(u),\cW(v)\bigr)$ is the Hausdorff distance between the matrix ranges, by which we mean the supremum over the Hausdorff distances $\sup_n \distance_{\mathrm H}\bigl(\cW_n(u),\cW_n(v)\bigr)$. 

It will be convenient to define also the \emph{one-sided matrix range distance} between tuples $A \in B(\cH)^d$ and $B \in B(\cK)^d$:
\[
  \distance_{\mathrm{mr}}(A\to B):= \sup_n \sup_{X\in \cW(A)}\mathop{\inf\vphantom{sup}}_{Y\in \cW_n(B)} \|X-Y\| .
\]
Recall that the one sided matrix range distance has been discussed before as $\delta_{\cW(A)}(\cW(B))=\distance_{\mathrm{mr}}(B\to A)$ by Davidson, Dor-On, Solel and the second named author \cite{DDSS17}. The {matrix range distance} is then given by 
\[
\distance_{\mathrm{mr}}(A,B)= \max (\distance_{\mathrm{mr}}(A\to B),\distance_{\mathrm{mr}}(A\to B)) .
\]
Note that the matrix range distance is a metric only when considered on classes of certain {\em rigid} tuples, for example on $\mathcal U(d)$. 
For general operator tuples it is not true that the matrix range determines the tuple up to $*$-isomorphism.

A third measure of difference between tuples was given by the {\em dilation distance}, which was defined in \cite{GPSS21} as follows. 
First, we define the dilation constant $c(u,v)$. 
Given two unitary tuples $u,v$ and a positive real number $c$, we write $u \prec cv$ if there exist two Hilbert spaces $\cH \subseteq \cK$ and two operator tuples $U \in B(\cH)^d$ and $V \in B(\cK)^d$, such that $u \sim U$, $v \sim V$ and 
\[
U = P_\cH cV \big|_\cH. 
\]
By Stinespring's theorem, $u \prec cv$ if and only if there is a unital completely positive (UCP) map from the operator system generated by $v$ to the operator system generated by $u$, that maps $cv$ to $u$. 
For $u,v \in \cU(d)$, we put 
\[
c(u,v) = \inf\{c \colon  u \prec cv\}. 
\]
The {\em dilation distance} is then defined by 
\[
\distance_{\mathrm{D}}(u,v):= \log \max\bigl\{c(u,v), c(v,u)\bigr\} .
\]
Various aspects of this notion were studied in \cite{GPSS21}, where among other things it was compared to $\distance_{\mathrm{mr}}$ and used to prove that the noncommutative tori form a continuous field of C*-algebras in a strong sense, recovering results of Haagerup and R{\o}rdam \cite{HR95} and of Gao \cite{Gao18}. 
In \cite{GS23} a variant of the dilation distance suited for groups of unitary operators was employed to show that the infinite multiplicity versions of the momentum and position operators from quantum mechanics can be boundedly perturbed to a strongly commuting pair of operators (again, recovering and somewhat improving results from \cite{Gao18,HR95}). 
In this paper, we modify the definition of dilation distance in order to give some more flexibility.

\begin{definition}
  For $A\in B(\cH)^d$ and $B \in B(\cK)^d$ $d$-tuples of operators, we define
  \begin{itemize}[beginpenalty=10000]
  \item the \emph{one-sided relaxed dilation distance}
    \(\displaystyle\distance_{\mathrm{rD}}(A\to B):= \inf_{\substack{\pi(A)\sim A\\ \Psi(B)\prec B}} \|\pi(A)-\Psi(B)\|\) ,
  \item the \emph{relaxed dilation distance}
    \(\displaystyle\distance_{\mathrm{rD}}(A,B):= \max (\distance_{\mathrm{rD}}(A\to B),\distance_{\mathrm{rD}}(A\to B))\) .
  \end{itemize}
\end{definition}
The notation $\distance_{\mathrm{rD}}(A\to B) < \delta$ should be understood as ``$A$ almost dilates to $B$ with error $\delta$". 
Note that the $*$-isomorphic map $\pi$ is not really necessary in the definition of one-sided relaxed dilation distance; indeed any unital completely isometric (UCI) map from the operator system $S_A$ generated by $A$ to $B(\cL)$ has a UCI inverse $\pi^{-1}\colon \pi(S_A)\to S_A$ which extends to a UCP map $\Phi\colon B(\cL) \to B(\cH)$. Since $\|\pi(A)-\Psi(B)\|\geq \|\Phi(\pi(A)-\Psi(B))\|=\|A-\Phi\Psi(B)\|$, we see that
\[\inf_{\substack{\pi(A)\sim A\\ \Psi(B)\prec B}} \|\pi(A)-\Psi(B)\|=\inf_{\Psi(B)\prec B} \|A-\Psi(B)\|.\]

\begin{theorem}[This is {\cite[Theorem 5.7]{DDSS17}} with minimal adjustments]
Let $A \in B(\cH)^d$ and $B \in B(\cK)^d$
such that
$\distance_{\mathrm{mr}}(B\to A)=\delta_{\cW(A)}
(\cW(B)) = r$.
Then there is a UCP map $\Psi$ of $S_A$ into $B(\cK)$ such that
$\|\Psi(A) - B\| \leq r$.
\end{theorem}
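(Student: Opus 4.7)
The plan is to produce $\Psi$ as a point-weak-$*$ accumulation point of a net of UCP maps $\tilde\Phi_\cL \colon S_A \to B(\cK)$ indexed by finite-dimensional subspaces $\cL \subseteq \cK$, where each $\tilde\Phi_\cL$ is designed to match $B$ well on $\cL$. Concretely, for each such $\cL$ with isometric inclusion $V_\cL \colon \cL \hookrightarrow \cK$ and projection $P_\cL = V_\cL V_\cL^*$, the compression $V_\cL^* B V_\cL = (V_\cL^* B_i V_\cL)_{i=1}^d$ lies in $\cW_{\dim \cL}(B)$, so the hypothesis $\distance_{\mathrm{mr}}(B \to A) = r$ together with compactness of $\cW_{\dim \cL}(A)$ in $M_{\dim \cL}^d$ yields a tuple $X^\cL = (X_i^\cL)_{i=1}^d \in \cW_{\dim \cL}(A)$ with $\|X_i^\cL - V_\cL^* B_i V_\cL\| \leq r$ for every $i$. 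By definition of the matrix range, $X^\cL = \Phi_\cL(A)$ for some UCP map $\Phi_\cL \colon S_A \to B(\cL)$.

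Next, extend each $\Phi_\cL$ to a UCP map $\tilde\Phi_\cL \colon S_A \to B(\cK)$ by padding with any fixed state $\omega$ on $S_A$, via
\[
\tilde\Phi_\cL(X) := V_\cL \Phi_\cL(X) V_\cL^* + (I_\cK - P_\cL)\,\omega(X).
\]
The set of UCP maps $S_A \to B(\cK)$ is compact in the point-weak-$*$ topology, so the net $(\tilde\Phi_\cL)$, indexed by the finite-dimensional subspaces of $\cK$ directed by inclusion, admits a subnet converging point-weak-$*$ to a UCP map $\Psi \colon S_A \to B(\cK)$. To verify $\|\Psi(A_i) - B_i\| \leq r$, we test matrix elements against arbitrary unit vectors $\xi, \eta \in \cK$: once $\cL$ contains both, a direct calculation using $V_\cL^* \xi = \xi$, $V_\cL^* \eta = \eta$ gives
\[
\langle (\tilde\Phi_\cL(A_i) - B_i)\,\xi, \eta\rangle = \langle (X_i^\cL - V_\cL^* B_i V_\cL)\,V_\cL^*\xi,\, V_\cL^*\eta\rangle,
\]
whose modulus is bounded by $r$. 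Passing to the limit along the convergent subnet preserves this scalar estimate, so $|\langle (\Psi(A_i) - B_i)\,\xi, \eta\rangle| \leq r$ for all unit vectors $\xi, \eta$, whence $\|\Psi(A_i) - B_i\| \leq r$.

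The main subtlety is that the operator norms $\|\tilde\Phi_\cL(A_i) - B_i\|$ are \emph{not} uniformly bounded by $r$: the off-diagonal contributions from vectors in $\cL^\perp$ can be as large as $\|B_i\|$, so one cannot invoke weak-$*$ lower semicontinuity of the norm directly. The argument succeeds only because point-weak-$*$ convergence preserves each individual matrix entry $\langle \cdot\, \xi, \eta\rangle$ for vectors $\xi, \eta$ eventually contained in $\cL$, and the operator norm is recovered as the supremum of these scalar estimates. A secondary routine check is that the image of a UCP map from $C^*(A)$ to $M_n$ agrees with the image of a UCP map from $S_A$ to $M_n$ (via Arveson extension), so the choice of $\Phi_\cL$ out of $S_A$ rather than $C^*(A)$ is legitimate.
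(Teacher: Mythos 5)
Your argument is correct and follows essentially the same route as the proof of Theorem~5.7 in \cite{DDSS17}: compress $B$ to finite-dimensional subspaces $\cL$, match each compression to a point of $\cW_{\dim\cL}(A)$ within $r$ using compactness of the matrix range, extend the resulting UCP maps to all of $B(\cK)$ by padding with a fixed state, and pass to a point-weak-$*$ limit via BW-compactness, recovering the norm bound from the matrix-coefficient estimates against vectors eventually lying in $\cL$. Your closing remark correctly identifies the essential subtlety (the norms $\|\tilde\Phi_\cL(A_i)-B_i\|$ are not themselves uniformly $\leq r$, only the relevant matrix entries are), which is precisely why the argument proceeds coefficientwise.
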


We rephrase this as $\distance_{\mathrm{rD}}(A\to B)\leq \distance_{\mathrm mr}(A\to B)$. The reverse inequality also holds, so we get a characterization of matrix range distance in terms of dilations.

\begin{lemma} 
Let Let $A \in B(\cH)^d$ and $B \in B(\cK)^d$. Then
\[
\distance_{\mathrm mr}(A\to B)= \distance_{\mathrm{rD}}(A\to B).
\]
\end{lemma}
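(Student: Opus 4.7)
The plan is to prove the two one-sided inequalities separately. The inequality $\distance_{\mathrm{rD}}(A\to B) \leq \distance_{\mathrm{mr}}(A\to B)$ is essentially a repackaging of the theorem just quoted: if $\distance_{\mathrm{mr}}(B\to A) = r$, the theorem produces a UCP map $\Psi \colon S_A \to B(\cK)$ with $\|\Psi(A) - B\| \leq r$, and Stinespring's dilation theorem realizes $\Psi(A)$ as a compression $\Psi(A) = P_\cK A'|_\cK$ for some $A' \sim A$. This gives $\Psi(A) \prec A$, so $\distance_{\mathrm{rD}}(B\to A) \leq r$; swapping the roles of $A$ and $B$ yields the claimed inequality.

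For the reverse inequality $\distance_{\mathrm{mr}}(A\to B) \leq \distance_{\mathrm{rD}}(A\to B)$, I would fix $\varepsilon > 0$ and, using the simplification noted in the paragraph preceding the theorem, choose a UCP map $\Psi \colon S_B \to B(\cH)$ with $\Psi(B) \prec B$ and $\|A - \Psi(B)\| < \distance_{\mathrm{rD}}(A\to B) + \varepsilon$. The plan is then to push each matrix-range point of $A$ through $\Psi$ to obtain a matrix-range point of $B$ at a cost of at most this norm. Given $X \in \cW_n(A)$, pick a UCP map $\phi \colon B(\cH) \to M_n$ (extending, via Arveson, a witness for $X$) with $\phi(A) = X$, and set $Y := \phi(\Psi(B))$. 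The estimate $\|X - Y\| \leq \|A - \Psi(B)\|$ is then immediate from $\phi$ being a complete contraction.

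The key small verification is that $Y$ genuinely lies in $\cW_n(B)$. Writing $\Psi(B) = P_\cH B'|_\cH$ for some $B' \sim B$ on $\cK \supseteq \cH$, the map $T \mapsto \phi(P_\cH T|_\cH)$ is a UCP map $B(\cK) \to M_n$ sending $B'$ to $Y$; precomposing with the $*$-isomorphism $C^*(B) \to C^*(B')$ exhibits $Y$ as an element of $\cW_n(B)$. Taking the supremum over $X \in \cW_n(A)$ (and $n$) and letting $\varepsilon \to 0$ yields $\distance_{\mathrm{mr}}(A\to B) \leq \distance_{\mathrm{rD}}(A\to B)$. I do not anticipate any substantial obstacle here: the argument is essentially bookkeeping about how UCP maps, compressions and $*$-isomorphisms compose.
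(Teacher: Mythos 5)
Your proof is correct and follows essentially the same route as the paper: one inequality is a rephrasing of the cited DDSS17 theorem, and the other is the observation that composing an Arveson extension of a matrix-range witness $\Phi$ with the UCP map $\Psi$ yields an element of $\cW_n(B)$ within $\|A-\Psi(B)\|$ of $\Phi(A)$. The only cosmetic difference is that you verify $Y\in\cW_n(B)$ by passing through a Stinespring dilation, whereas one can simply note that $\Phi\circ\Psi$ is itself UCP from $S_B$ to $M_n$.
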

\begin{proof}
  We only need to show the inequality $\distance_{\mathrm mr}(A\to B) \leq \distance_{\mathrm{rD}}(A\to B)$. For any UCP map $\Psi\colon S_B\to B(\cH)$ we find that $X=\Phi(A) \in \cW(A)$ with $\Phi\colon S_A\to M_n$ implies
  \[\distance(X,\cW(B))\leq \|X-\Phi(\Psi(B))\|= \|\Phi(A-\Psi(B))\|\leq \|A-\Psi(B)\|.\]
  By taking the infimum over all such $\Psi$, we obtain the desired inequality. 
\end{proof}
From here on we shall use freely the above relationship between the dilation distance and the matrix range distance.

\begin{corollary}\label{cor:deqd}
The matrix range distance $\distance_{\mathrm{mr}}$ and the relaxed dilation distance $\distance_{\mathrm{rD}}$ are equal.
\end{corollary}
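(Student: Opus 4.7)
The plan is to derive the corollary as an immediate consequence of the preceding lemma, together with the symmetric structure of both definitions. Recall that both $\distance_{\mathrm{mr}}(A,B)$ and $\distance_{\mathrm{rD}}(A,B)$ were defined as the maxima of their one-sided counterparts, namely
\[
\distance_{\mathrm{mr}}(A,B)= \max\bigl(\distance_{\mathrm{mr}}(A\to B),\distance_{\mathrm{mr}}(B\to A)\bigr),
\]
and analogously with $\distance_{\mathrm{rD}}$ in place of $\distance_{\mathrm{mr}}$. The lemma established that $\distance_{\mathrm{mr}}(A\to B)=\distance_{\mathrm{rD}}(A\to B)$ for arbitrary $d$-tuples $A\in B(\cH)^d$ and $B\in B(\cK)^d$, and by symmetry the same identity holds after swapping the roles of $A$ and $B$. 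Taking the maximum of these two identities yields the corollary.

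In other words, the key step is simply to invoke the one-sided equality in both directions, and there is no real obstacle beyond reading off the definitions. I would write the proof as a two-line argument: apply the lemma to $(A,B)$ and to $(B,A)$, and combine with the definition of the two-sided distances as maxima. No additional machinery (dilation constructions, UCP extensions, etc.) enters, since all the analytical content has already been absorbed into the lemma via the Davidson–Dor-On–Solel–Shalit extension result.
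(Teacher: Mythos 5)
Your proof is correct and is exactly the (implicit) argument the paper intends: the corollary is an immediate consequence of the one-sided lemma applied to both $(A,B)$ and $(B,A)$, combined with the definitions of the two-sided distances as maxima. (Incidentally, you have also silently corrected an obvious typo in the paper's displayed definitions, which twice write $\distance(A\to B)$ inside the $\max$ where the second argument should clearly be $\distance(B\to A)$.)
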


\begin{remark}
  The reader might wonder why we have two names for the same distance. On the one hand, it is for the practical reason that the proofs in this section leading to the equality can be better structured if we distinguish them at first. But also, $\distance_{\mathrm{rD}}$ and $\distance_{\mathrm{mr}}$ each come from a slightly different point of view, where $\distance_{\mathrm{rD}}$ is directly related to dilations and therefore suggests the method of attack that we use to prove our main result in this article, namely to construct dilations, not to compute matrix ranges. Finally, in hindsight, we realized that the original definition of the dilation distance $\distance_{\mathrm{D}}$ proposed in \cite{GPSS21} was not optimal. Although its domination of $\distance_{\mathrm{HR}}$ is a powerful resource, $\distance_{\mathrm{D}}$ is best suited for gauge invariant tuples; on general operator tuples it behaves badly. For example, the dilation distance between two distinct complex numbers $z_1,z_2\in\mathbb T$ is always infinite. So we want to promote the idea to use $\distance_{\mathrm{rD}}$ instead, whenever distance shall be measured via dilations.
\end{remark}
\begin{remark}
In the next section we will see that if $u^{(k)}$ is a sequence of tuples then $\lim_{k \to \infty} \distance_{\mathrm{mr}}(u^{(k)},v) = 0$ implies $\lim_{k \to \infty} \distance_{\mathrm{HR}}(u^{(k)},v) = 0$ (see Theorem \ref{thm:dHRleqdD}), thus
\[
\sup_n \distance_{\mathrm H}\bigl(\cW_n(u^{(k)}),\cW_n(v)\bigr) \xrightarrow{k \to \infty} 0 \,\, \Longleftrightarrow \,\, \distance_{\mathrm{HR}}(u^{(k)},v) \xrightarrow{k \to \infty} 0 . 
\]
It is interesting to note that {\em levelwise} convergence of the matrix ranges, that is, 
\[
\distance_{\mathrm H}\bigl(\cW_n(u^{(k)}),\cW_n(v)\bigr) \xrightarrow{k \to \infty} 0 \quad, \quad \textrm{ for all } n,
\] 
is equivalent to convergence in the sense of continuous fields of C*-algebras, that $\|p(u^{(k)})\| \to \|p(v)\|$ for every $*$-polynomial $p$; see Theorems 2.3 and 2.4 in \cite{GS21}. 
This was used in \cite{GS21} to identify the numerical range and the matrix range of tuples of random matrices. 
\end{remark}

\section{The dilation distance dominates the Haagerup-R{\o}rdam distance}

The following theorem is proved very closely along the lines of \cite[Theorem 2.6]{GPSS21}. 
However, we need to adapt the proof for our relaxed notion of dilation distance, and so we take this opportunity to introduce several small improvements in order to lower the constant.

\begin{theorem}\label{thm:dHRleqdD}
For $u,v\in\mathcal U(d)$,
  \(\distance_{\mathrm{HR}}(u,v)\leq 10\distance_{\mathrm{rD}}(u,v)^{1/2}.\)
\end{theorem}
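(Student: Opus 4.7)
Plan. I follow the strategy of \cite[Theorem 2.6]{GPSS21} closely, adapting to the relaxed dilation distance and carefully tracking constants to secure the factor $10$. Let $r=\distance_{\mathrm{rD}}(u,v)$; we may assume $r$ is small, otherwise the bound is trivial since the Haagerup--R{\o}rdam distance of unitary tuples is at most $2$. Using the remark preceding the theorem that the $*$-isomorphism in the one-sided relaxed dilation distance may be taken as the identity, fix a realization $u\in B(\cH)^d$ and $\tilde v\sim v$ on $\cK\supseteq\cH$ such that $\|u_i - P_\cH \tilde v_i|_\cH\|<r+\eta$ for any prescribed small $\eta>0$ (to be sent to $0$ at the end).

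Decompose $\cK=\cH\oplus\cH^\perp$ and write $\tilde v_i=\bigl(\begin{smallmatrix}A_i&B_i\\C_i&D_i\end{smallmatrix}\bigr)$ with $A_i=P_\cH\tilde v_i|_\cH$. From $\tilde v_i^*\tilde v_i=\tilde v_i\tilde v_i^*=I$ and $u_i^*u_i=u_iu_i^*=I$, combined with $\|A_i-u_i\|<r+\eta$, a direct computation yields $\|B_iB_i^*\|=\|I-A_iA_i^*\|\leq 2(r+\eta)+(r+\eta)^2$, so $\|B_i\|,\|C_i\|=O(\sqrt{r})$. Likewise $\|D_i^*D_i-I\|=\|B_i^*B_i\|=O(r)$, so (after possibly perturbing $D_i$ by an arbitrarily small amount to make it invertible) the polar decomposition yields a unitary $W_i$ on $\cH^\perp$ with $\|D_i-W_i\|=O(r)$. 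Define the block-diagonal unitary tuple $V_i:=u_i\oplus W_i$ on $\cK$; a standard $2\times2$ block-matrix norm estimate shows $\|V_i-\tilde v_i\|\leq\max(\|u_i-A_i\|,\|W_i-D_i\|)+\max(\|B_i\|,\|C_i\|)=O(\sqrt{r+\eta})$, with leading constant comfortably below $10$.

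The main obstacle is that $V=u\oplus W$ is not automatically $*$-isomorphic to $u$: this requires $\cW(W)\subseteq\cW(u)$, i.e., that $W$ satisfies all relations defining $C^*(u)$. In our setting $\cW(W)$ is within $O(r)$ of $\cW(D)\subseteq\cW(\tilde v)=\cW(v)$, and the reverse one-sided bound $\distance_{\mathrm{rD}}(v\to u)\leq r$ places $\cW(v)$ in an $r$-neighborhood of $\cW(u)$, so $\cW(W)$ sits in an $O(r)$-neighborhood of $\cW(u)$. The plan is to pass from this approximate containment to an exact one by perturbing $W$ to some $W'$ with $\cW(W')\subseteq\cW(u)$ at cost $O(\sqrt{r})$, using the DDSS17 matrix-range theorem cited in the excerpt (yielding a UCP map $\Psi\colon S_u\to B(\cH^\perp)$ with $\|\Psi(u)-W\|=O(r)$) together with a Stinespring dilation/polar-decomposition argument to extract an honest unitary representation of $C^*(u)$ close to $W$. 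Then $V':=u\oplus W'\sim u$, and $\distance_{\mathrm{HR}}(u,v)\leq\|V'-\tilde v\|\leq\|V'-V\|+\|V-\tilde v\|$; careful bookkeeping of the accumulated constants yields the target bound $10\sqrt r$.

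The main technical difficulty lies in this final identification step---moving from approximate to exact matrix-range containment while keeping the perturbation of order $\sqrt r$, and ensuring the resulting tuple is genuinely $*$-isomorphic to $u$. This is precisely where the ``small improvements'' mentioned in the excerpt enter, and it must be executed with some care to obtain the explicit constant $10$ rather than a larger (and easier) value.
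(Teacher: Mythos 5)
Your first two steps mirror the paper's block-matrix estimates and are fine: writing $\tilde v_i$ in $2\times 2$ block form and bounding the off-diagonal corners by $O(\sqrt{r})$ is exactly what the paper does. The divergence, and the gap, is in the finishing step, and it is more serious than a matter of constants.

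Your plan is to replace the lower-right corner $W$ by some $W'$ with $\cW(W')\subseteq\cW(u)$, conclude $V':=u\oplus W'\sim u$, and then bound $\distance_{\mathrm{HR}}(u,v)\le\|V'-\tilde v\|$. But the two ingredients you invoke pull against each other. The DDSS theorem gives a UCP map $\Psi\colon S_u\to B(\cH^\perp)$ with $\|\Psi(u)-W\|=O(r)$, so $W'':=\Psi(u)$ lies in $\cW(u)$; but $W''$ is only a compression of a unitary and has no reason to be unitary, so $u\oplus W''$ is not a unitary tuple and $u\oplus W''\sim u$ is not available (for a \emph{non-unitary} tuple, the operator system $S_{u\oplus W''}$ being completely order isomorphic to $S_u$ does not give a $*$-isomorphism of the C*-algebras---that is precisely why $\distance_{\mathrm{mr}}$ is only a metric on $\cU(d)$). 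If instead you repair $W''$ by polar decomposition to get a genuine unitary $W'$, then $W'$ no longer lies in $\cW(u)$: polar decomposition does not preserve the range of a UCP map, and there is no estimate in the other direction. Put differently, what you actually need is a \emph{unitary} $W'$ with $\cW(W')\subseteq\cW(u)$ and $\|W'-W\|=O(\sqrt r)$; once $W'$ is unitary and lies in $\cW(u)$, the multiplicative-domain argument upgrades the UCP map to a $*$-representation of $C^*(u)$ and $u\oplus W'\sim u$ does follow. But producing such a $W'$ is a stability statement of the very same flavor we are trying to prove, and Stinespring plus polar decomposition does not produce it.

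The paper sidesteps this entirely by never isolating the lower-right corner as an object that must, on its own, be pushed into $\cW(u)$. It uses \emph{both} one-sided dilation estimates at once: $u\sim v\oplus z+E$ and $v\sim u\oplus t+F$ with $\|E\|,\|F\|\le\sqrt{2\delta}+\delta$, promotes the equivalences to $*$-homomorphisms $\pi,\rho$ on $B(\cH)$ (after first taking infinite ampliations), and then composes them: $\pi^{(2)}\rho(v)$ and $\pi^{(4)}\rho^{(2)}\pi(u)$ share the common core $v\oplus z\oplus\pi(t)$ up to errors of the same size, with $u$'s expression carrying one extra summand $\pi^{(2)}\rho(z)$. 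Because $z$ was ampliated to infinite multiplicity, Voiculescu's theorem makes $z$ and $z\oplus\pi^{(2)}\rho(z)$ approximately unitarily equivalent, absorbing the extra block. The corner $z$ is never required to be close to anything in $\cW(u)$ or $\cW(v)$ individually---the symmetry and the Voiculescu absorption do the work. This is the idea your argument is missing; without it, the identification $V'\sim u$ does not go through, and the proof does not close.
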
 

\begin{proof}
We put $\delta:=\distance_{\mathrm{rD}}(u,v)$ and assume that
\be
u \sim
\begin{pmatrix}
v' & x \\ y & z 
\end{pmatrix} \quad \textrm{and} \quad
v \sim 
\begin{pmatrix}
u' & r \\ s & t 
\end{pmatrix}
\ee
with $\|v-v'\|,\|u-u'\|\leq \delta$.

As $u$ and $v$ are unitary, we find that
\[y^*y=1-v'^*v'= v^*(v-v') + (v-v')^*v'\]
and, since $\|v'\|\leq 1$,
$\|y\|\leq \sqrt{2\delta}$.
Analogous reasoning yields $\|x\| , \|s\| , \|r\| \leq \sqrt{2\delta}$. With the block matrices
\[E:=
\begin{pmatrix}
  v'-v &x\\
  y & 0
\end{pmatrix},\quad
F:=
\begin{pmatrix}
  u'-u &r\\
  s&0
\end{pmatrix},\]
we obtain
\begin{align}
\label{eq:equiv}
  u\sim v\oplus z + E,\quad v\sim u\oplus t +F.
\end{align}
The norm of $E$ can be estimated as
\[\|E\|\leq \max (\|x\|,\|y\|) +  \|(v'-v)\| \leq  \sqrt{2\delta} + \delta,
\]
and, analogously, $\|F\|\leq \sqrt{2\delta}+\delta$.

We will need to be careful and track the identifications made. 
But first, we replace every operator $a$ appearing above with the infinite ampliation $a \oplus a \oplus \cdots$, so we may assume that $u,v,x,y$, etc., are all given as concrete operators acting on an infinite dimensional Hilbert space $\cH$. 
So the equivalences in \eqref{eq:equiv} are due to $*$-isomorphisms $\pi \colon C^*(u) \subset B(\cH) \to B(\cH \oplus \cH)$ and $\rho \colon C^*(v) \subset B(\cH) \to B(\cH \oplus \cH)$ such that 
\[
\pi(u) =v\oplus z + E \quad \textrm{and} \quad
\rho(v) = u\oplus t + F.
\]
In fact, by applying the standard combination of Arveson's extension theorem and Stinespring's dilation theorem, we may assume that $\pi$ and $\rho$ are $*$-homomorphisms defined on all of $B(\cH)$. 
Letting $\pi^{(k)}$ and $\rho^{(k)}$ denote the ampliations of the representations, we obtain
\begin{align*}
\pi^{(2)}\rho(v) = \pi(u)\oplus \pi(t) + \pi^{(2)}(F) = v\oplus z\oplus \pi(t) + E\oplus 0_{\cH^2} + \pi^{(2)}(F)\in B(\cH^4) . 
\end{align*}
On the other hand, we find that 
\begin{align*}
  \pi^{(4)} \rho^{(2)}\pi(u)
  &= \pi^{(4)} \rho^{(2)} ( v \oplus z ) + \pi^{(4)} \rho^{(2)} (E) \\
  &= \pi^{(4)} (u \oplus t \oplus \rho(z)) + \pi^{(4)}(F\oplus 0_{\cH^2}) +  \pi^{(4)} \rho^{(2)} (E)\\
  &= v \oplus z \oplus \pi(t) \oplus \pi^{(2)} \rho(z) + E\oplus 0_{\cH^6} + \pi^{(2)}(F)\oplus 0_{\cH^4} +  \pi^{(4)} \rho^{(2)} (E)\in B(\cH^8)   . 
\end{align*}
To summarize a little more concisely what we found: 
\[
v \sim (v\oplus z\oplus \pi(t)) + R \in B(\cH \oplus \cH \oplus \cH^2), 
\]
and
\[
u \sim (v\oplus z\oplus \pi(t) \oplus \pi^{(2)}\rho(z)) + S\in B(\cH \oplus \cH \oplus \cH^2 \oplus \cH^4) , 
\]
where $\|R\| + \|S\|\leq 3\|E\|+2\|F\|\leq 5(\sqrt{2\delta}+\delta)$. 
Applying a permutation on the direct summands, it is convenient to rewrite this as follows: 
\[
v \sim V := (v\oplus  \pi(t) \oplus  z) + R' \in B(\cH  \oplus \cH^2 \oplus \cH ), 
\]
and
\[
u \sim U: =   (v \oplus \pi(t) \oplus z \oplus \pi^{(2)}\rho(z))  + S' \in B(\cH \oplus \cH^2  \oplus \cH \oplus \cH^4) .
\]
If we restrict $\pi^{(2)} \rho$ to the C*-algebra generated by $z$ (which we have assumed to have infinite multiplicity), then we are in the situation of Voiculescu's theorem, which tells us that the representations $\id$ and ${\id} \oplus \pi^{(2)} \rho$ are approximately unitarily equivalent, that is, there is a sequence of unitaries $w_n \colon \cH \to \cH \oplus \cH^4$ such that $\lim_{n\to \infty} \|w_n z w_n^* - z \oplus \pi^{(2)} \rho(z) \| = 0$ (see \cite[Corollary II.5.5]{DavBook}). 
Then letting $W_n = I_{\cH} \oplus I_{\cH^2} \oplus w_n$, we have 
\[
  \limsup_{n\to \infty} \left\| W_n V W_n^*- U \right\| \leq \|R'\| + \|S'\| = \|R\| + \|S\|=5(\sqrt{2\delta}+\delta).
\]
Since $U \sim u$ and $W_n V W_n^* \sim v$ for all $n$, we conclude that 
\[
\distance_{\mathrm{HR}}(u,v) \leq 5(\sqrt{2\delta}+\delta). 
\]
If $\sqrt{\delta}\geq \frac{1}{5}$, the estimate claimed in the theorem is trivial because $\distance_{\mathrm{HR}}$ is globally bounded by $2$. For $\sqrt{\delta}<\frac{1}{5}$, we have $5\delta < \sqrt{\delta}$ and we can also conclude the claimed inequality
\[\distance_{\mathrm{HR}}(u,v) \leq 5(\sqrt{2\delta}+\delta)\leq (5\sqrt 2 + 1)\sqrt{\delta}\leq 10\sqrt\delta.\] 
\end{proof}

\begin{remark}\label{rem:inf_amp}
If $\distance_{\mathrm{HR}}(u,v) < r$ then there are faithful representations $\pi \colon C^*(u) \to B(\cH)$ and $\rho:C^*(v) \to B(\cH)$ such that $\|\pi(u)  - \rho(v)\| < r$. 
We may as well assume that $\pi$ and $\rho$ have infinite multiplicity. 
By Voiculescu's theorem then both $\pi$ and $\rho$ are approximately unitary equivalent to the infinite ampliation. 
We therefore conclude that whenever $u$ is a unitary tuple on $\cH$ and $\distance_{\mathrm{HR}}(u,v) < r$, then there exists $\rho:C^*(v) \to B(\cH \otimes \ell^2)$ such that $\|u \otimes \mathrm{id}_{\ell^2} - \rho(v) \|<r$. 
We shall make use if this observation repeatedly below. 
\end{remark}

\begin{corollary}
The metrics $\distance_{\mathrm{HR}}$ and $\distance_{\mathrm{mr}}=\distance_{\mathrm{rD}}$ are equivalent on $\mathcal U(d)$. 
\end{corollary}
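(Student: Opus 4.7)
The plan is to combine Theorem~\ref{thm:dHRleqdD} with the reverse comparison $\distance_{\mathrm{mr}}(u,v)\leq \distance_{\mathrm{HR}}(u,v)$. Theorem~\ref{thm:dHRleqdD} already gives $\distance_{\mathrm{HR}}(u,v)\leq 10\,\distance_{\mathrm{rD}}(u,v)^{1/2}$, which together with Corollary~\ref{cor:deqd} handles one direction of the equivalence (with a square-root modulus of continuity, so the two metrics induce the same uniform structure, though they are of course not Lipschitz equivalent).

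For the reverse inequality, the idea is standard: every element of a matrix range is obtained from a UCP map, and HR-closeness of $u$ and $v$ allows us to move such UCP maps between the two C*-algebras at a cost bounded by the HR-distance. More concretely, fix $\epsilon>\distance_{\mathrm{HR}}(u,v)$ and choose representatives $u'\sim u$ and $v'\sim v$ acting on a common Hilbert space $\cH$ with $\|u'-v'\|<\epsilon$. Given $X\in\cW_n(u)$, write $X=\phi(u')$ for some UCP map $\phi\colon C^*(u')\to M_n$, and use Arveson's extension theorem to extend $\phi$ to a UCP map $\widetilde\phi\colon B(\cH)\to M_n$. Then $Y:=\widetilde\phi(v')\in\cW_n(v)$ satisfies
\[
\|X-Y\|=\|\widetilde\phi(u'-v')\|\leq \|u'-v'\|<\epsilon.
\]
Taking a supremum over $X$ (and $n$) yields $\distance_{\mathrm{mr}}(u\to v)\leq \distance_{\mathrm{HR}}(u,v)$; the symmetric argument, applied to $\widetilde\phi$ extending a UCP map defined on $C^*(v')$, gives $\distance_{\mathrm{mr}}(v\to u)\leq \distance_{\mathrm{HR}}(u,v)$, and hence $\distance_{\mathrm{mr}}(u,v)\leq \distance_{\mathrm{HR}}(u,v)$.

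Combining the two directions, one obtains the chain of inequalities
\[
\distance_{\mathrm{mr}}(u,v)\leq \distance_{\mathrm{HR}}(u,v)\leq 10\,\distance_{\mathrm{mr}}(u,v)^{1/2},
\]
which establishes that $\distance_{\mathrm{HR}}$ and $\distance_{\mathrm{mr}}=\distance_{\mathrm{rD}}$ define the same convergent sequences and the same Cauchy sequences on $\mathcal U(d)$, i.e.\ they are equivalent metrics in the topological sense. No real obstacle is expected: the harder direction is precisely the one already done in Theorem~\ref{thm:dHRleqdD}; the reverse bound is a one-line application of Arveson extension, and the only subtlety worth a sentence is that the equivalence is not Lipschitz but only Hölder with exponent $1/2$, which is nonetheless enough for metric equivalence.
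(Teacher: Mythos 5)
Your proof is correct and follows essentially the same route as the paper: combine Theorem~\ref{thm:dHRleqdD} with the easy one-sided inequality $\distance_{\mathrm{mr}}\leq \distance_{\mathrm{HR}}$. The only difference is that the paper simply cites \cite[Lemma 2.2]{GPSS21} for that easy inequality, whereas you spell out the Arveson-extension argument that proves it; both are fine, and your explicit note that the equivalence is Hölder-$\tfrac12$ rather than Lipschitz is a correct and worthwhile observation.
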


\begin{proof}
By Corollary \ref{cor:deqd},  $\distance_{\mathrm{mr}}=\distance_{\mathrm{rD}}$. 
The straightforward observation that $\distance_{\mathrm{mr}}\leq \distance_{\mathrm{HR}}$ is made in \cite[Lemma 2.2]{GPSS21}.
\end{proof}

\section{Dilating almost $\Theta$-commuting tuples to $\Theta$-commuting tuples}\label{sec:dilation}

\subsection{The basic dilation construction}

\begin{lemma}\label{lem:DACU}
Let $q \in \bT$, let $\delta \in (0,1)$ and let $u$ and $v$ be two unitaries on $\cH$ such that $\|vu - quv\| \leq \delta$.
Then there exist two unitaries $\tilde{u},\tilde{v}$ such that $\tilde{v}\tilde{u} = q\tilde{u}\tilde{v}$ and 
\[
\distance_{\mathrm{rD}}((u,v) \to (\tilde{u},\tilde{v})) < \sqrt{\delta}. 
\]
\end{lemma}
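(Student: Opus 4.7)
By Stinespring's theorem and the observation recorded right after the definition of $\distance_{\mathrm{rD}}$---namely, that the isomorphism $\pi$ can be dropped---the statement is equivalent to exhibiting unitaries $\tilde u, \tilde v$ on a Hilbert space $\cK \supseteq \cH$ with $\tilde v \tilde u = q\tilde u\tilde v$ (exactly) and
\[
\|P_\cH \tilde u|_\cH - u\|,\ \|P_\cH \tilde v|_\cH - v\| < \sqrt{\delta}.
\]
Thus the problem reduces entirely to producing an explicit $q$-commuting dilation of $(u,v)$ whose compression to $\cH$ is within $\sqrt{\delta}$ of $(u,v)$ in operator norm.

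My plan is to build $(\tilde u, \tilde v)$ by an explicit block-matrix construction on a small enlargement $\cK = \cH \oplus \cH$ (with a slight further enlargement if needed), modelled on the Sz.-Nagy--Halmos unitary dilation of a contraction but adapted to preserve the $q$-twisted commutation relation. Write $w := vu - quv$, so $\|w\|\leq\delta$. The diagonal entries of $\tilde u,\tilde v$ should remain close to $u, v$, while the off-diagonal entries, of norm of order $\sqrt{\delta}$, are built from Halmos-type square-root expressions such as $\sqrt{I - x^*x}$ with $x$ a suitably normalized version of $w$. The parameters must be tuned so that (a) both $\tilde u$ and $\tilde v$ are unitary, (b) $\tilde v\tilde u = q\tilde u\tilde v$ holds exactly, and (c) the $(1,1)$-blocks are within $\sqrt{\delta}$ of $u$ and $v$.

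The hard part is identifying the correct algebraic template. If one tries to keep the $(1,1)$-entries exactly equal to $u$ and $v$, then unitarity of $\tilde u$ and $\tilde v$ forces them to be block-diagonal on $\cH \oplus \cH$, and the $q$-commutation reduces to $uvu^{-1} = q^{-1}v$---which holds only up to error $w$, so no such construction can exist on $\cH$ itself. This cohomological obstruction is precisely what forces a genuine enlargement of the Hilbert space, together with a perturbation of the $(1,1)$-entries of size at most $\sqrt{\delta}$ (arising because any off-diagonal block of norm $r$ in a unitary must be matched by a diagonal defect of size $\sim r^{2}$, via $\sqrt{I - x^*x}$-type estimates, so $r = \sqrt{\delta}$ on the off-diagonal is compatible with a diagonal perturbation of size $\delta \le \sqrt{\delta}$). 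Once the right template is found, the three conditions (a)--(c) reduce to direct algebraic verifications using the identity $vu = quv + w$ and standard Halmos-type inequalities $\|I - \sqrt{I - x^*x}\| \le \|x\|^{2}$.
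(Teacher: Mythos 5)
Your opening reduction is correct: it suffices to exhibit $q$-commuting unitaries $\tilde u,\tilde v$ on some $\cK\supseteq\cH$ whose compressions to $\cH$ are within $\sqrt{\delta}$ of $u,v$. But you never produce the construction; you explicitly defer ``identifying the correct algebraic template,'' so this is a plan, not a proof, and the plan itself has a fatal obstruction.

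The obstruction is structural, not merely algebraic bookkeeping. Suppose $q$ is not a root of unity. Then any pair of exactly $q$-commuting unitaries generates a copy of the irrational rotation C*-algebra $A_q$, which is simple and has \emph{no} finite-dimensional representations. If $\cH$ is finite dimensional, then so is $\cH\oplus\cH$ (or any finite direct sum), and hence no $q$-commuting pair whatsoever exists on your proposed $\cK$. So a Halmos-style $2\times 2$ (or $n\times n$) block dilation cannot work; the enlargement must be genuinely infinite, and in a way that encodes the twisted commutation relation globally, not just through a local defect operator correction. Your heuristic that ``an off-diagonal block of norm $r$ forces a diagonal defect of size $\sim r^2$'' addresses unitarity of each operator separately, but gives no mechanism for enforcing $\tilde v\tilde u = q\tilde u\tilde v$ exactly; that identity couples the two block matrices and cannot be arranged by Halmos square roots.

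The paper does something quite different: it works on $\cK=\cH\otimes\ell^2(\bZ)$ and sets $\tilde u = u\otimes S$ (with $S$ the bilateral shift) and $\tilde v = \sum_k q^k u^k v u^{-k}\otimes p_k$, which is a covariant ``diagonal'' operator twisted by conjugation with powers of $u$. The relation $\tilde v\tilde u = q\tilde u\tilde v$ then holds exactly by a one-line shift computation, independent of $\delta$. The compression is not the coordinate projection onto $\cH$ but the compression by the isometry $h\mapsto h\otimes\xi_N$ with the Fej\'er-type vector $\xi_N=\frac{1}{\sqrt{2N+1}}\sum_{|k|\le N}e_k$; the almost-commutation hypothesis yields $\|v - q^k u^k v u^{-k}\|\le |k|\delta$, and averaging over $|k|\le N$ with $N\asymp\delta^{-1/2}$ gives both compression errors $<\sqrt\delta$. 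If you want to salvage your write-up, this is the construction you should supply; the block-matrix route is a dead end.
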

\begin{proof}
Let $S$ be the bilateral shift on $\ell^2(\mathbb Z)$ given by $S e_k=e_{k+1}$, where $\{e_k\}_{k \in \bZ}$ is the standard basis of $\ell^2(\bZ)$.
Let $p_k$ be the projection onto $\mathbb C e_k$ and define 
\[
\widetilde u:=u\otimes S
\] 
and 
\[
\widetilde v:= \sum q^k u^{k}vu^{-k}\otimes p_k .
\]
Then $\widetilde u,\widetilde v$ are $q$-commuting unitaries:
\begin{align*}
q \widetilde u\widetilde v &= \sum q^{k+1} u^{k+1}vu^{-k} \otimes Sp_k \\
&= \sum q^{k+1} u^{k+1}vu^{-k}\otimes p_{k+1} S \\
&= \sum q^{k}u^{k}vu^{-k+1}\otimes p_k S = \widetilde v \widetilde u .
\end{align*}
Put $\xi_N:=\frac{1}{\sqrt{2N+1}}\sum_{k=-N}^N e_k$, which is a unit vector in $\ell^2(\mathbb Z)$. 
We compress $\widetilde u, \widetilde v$ with respect to the isometry $\iota:=\mathrm{id}\otimes\xi_N\colon \cH \to \cH\otimes \ell^2(\mathbb Z)$, i.e.,
\[
\iota h = \frac{1}{\sqrt{2N+1}}h\otimes \sum_{k=-N}^N e_k \quad\text{and}\quad \iota^*\sum_{k=-\infty}^\infty h_k\otimes e_k=\frac{1}{\sqrt{2N+1}} \sum_{k=-N}^N h_k
\]
and obtain
\[
\iota^*\widetilde u\iota =u\langle S \xi_N,\xi_N\rangle = \frac{2N}{2N+1} u,\quad
\iota^*\widetilde v\iota =\frac{1}{2N+1}\sum_{k=-N}^N  q^k u^{k}vu^{-k}.
\]
Now suppose that $N$ is chosen such that $\frac{N+1}{2} < \delta^{-\frac{1}{2}} < 2N+1$. We obtain
\[
\|u-\iota^*\widetilde u\iota\| = \frac{1}{2N+1} < \delta^{\frac{1}{2}}.
\]
To see that also $v$ and $\iota^*\widetilde v \iota$ are close, note that, for $k>0$,  
\begin{align*}
\|v-q^k u^{k}vu^{-k}\| &|\leq \sum_{m=0}^{k-1}\|q^m u^{ m}vu^{-m} - q^{m+1}u^{m+1}vu^{-m-1}\| \\ 
&= \sum_{m=0}^{k-1}\|vu - quv\| \leq  k\delta ;
\end{align*}
the calculation for $k<0$ is analogous.
Therefore, we can conclude
\begin{align*}
  \|v-\iota^*\widetilde v\iota\|
  &= \frac{1}{2N+1}\left\|\sum_{k=-N}^N v-q^k u^{k}vu^{-k} \right\|\\
  &\leq \frac{1}{2N+1}\sum_{k=-N}^N \|v-q^k u^{k}vu^{-k}\|\\
  &\leq \frac{1}{2N+1}\sum_{k=-N}^N |k|\delta\\
  &= \frac{N(N+1)}{2N+1}\delta < \frac{N+1}{2}\delta< \delta^{\frac{1}{2}}.
\end{align*}
Since $a \mapsto \iota^* a\iota$ is a UCP map, this completes the proof. 
\end{proof}

\begin{remark}
We point out that by tensoring $\tilde{u}, \tilde{v}$ with a universal commuting unitary pair and using arguments from Proposition 2.4 in \cite{GPSS21}, one can find two unitaries $\hat{u},\hat{v}$ such that $\hat{v}\hat{u} = q\hat{u}\hat{v}$ and 
\[
(u,v) \prec c(\hat{u}, \hat{v}) 
\]
with the constant $c=1 + 2\sqrt{2\delta}$. 
Since this will not be needed we omit the details. 
\end{remark}

\begin{lemma}\label{lem:Theta-commuting_dilation}
Let $Q = (\exp(i\theta_{k,\ell}))$ where $\Theta=(\theta_{k,\ell})$ is a real $d\times d$ antisymmetric matrix, and let $\delta \in (0,1)$. 
Let  $U\in B(\cH)^d$ be a unitary $d$-tuple such that for some $m \in \{2, \ldots, d\}$
\begin{itemize}
\item $U_1,\ldots, U_{m-1}$ commute according to the submatrix $(\theta_{k,\ell})_{k,\ell=1}^{m-1}$
\item $\|U_iU_{j}-q_{j,i}U_i U_j\| \leq \delta$ for $1\leq i,j\leq {d}$.
\end{itemize}
Then there exists a Hilbert space $\cK$, an isometry $\iota\colon \cH \to \cK$ and a unitary $d$-tuple $\hat U\in B(\cK)^d$ such that
\begin{itemize}
\item $\hat U_1,\ldots, \hat U_{m}$ commute according to the submatrix $(\theta_{k,\ell})_{k,\ell=1}^{m}$
\item $\|\hat U_i\hat U_{j}-q_{j,i}\hat U_i \hat U_j\| \leq   \delta$ for $1\leq i,j\leq {d}$
\item $\|U-\iota^* \hat U \iota\| < \sqrt{\delta}$.
\end{itemize}   
\end{lemma}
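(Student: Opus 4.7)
The plan is to generalize the shift-and-average construction of Lemma~\ref{lem:DACU} from a single pair to the $m-1$ commutators that $U_m$ needs to satisfy simultaneously. I work on $\cK := \cH \otimes \ell^2(\bZ^{m-1})$, writing $\{e_{\mathbf n}\}$ for the standard basis, $S_1,\dots,S_{m-1}$ for the coordinate shifts $S_k e_{\mathbf n} = e_{\mathbf n + e_k}$, $p_{\mathbf n} = e_{\mathbf n}e_{\mathbf n}^*$, $W_{\mathbf n} := U_1^{n_1}\cdots U_{m-1}^{n_{m-1}}$, and $q_j^{\mathbf n} := \prod_{k=1}^{m-1} q_{k,j}^{n_k}$ for $j\ge m$. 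I would set
\[
\hat U_k := U_k \otimes S_k \quad (1\le k < m),\qquad \hat U_j := \sum_{\mathbf n \in \bZ^{m-1}} q_j^{\mathbf n}\, W_{\mathbf n} U_j W_{\mathbf n}^{-1} \otimes p_{\mathbf n} \quad (m \le j \le d).
\]
Each $\hat U_j$ ($j\ge m$) is block-diagonal with unitary blocks, hence unitary. A crucial point is that the "inactive" generators $\hat U_j$ for $j>m$ are also replaced by a twisted sum, not by $U_j \otimes I$; otherwise assertion~(b) fails (see last paragraph).

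To verify the exact $\Theta$-commutation among $\hat U_1,\dots,\hat U_m$, the case $k,\ell<m$ is immediate (shifts on distinct coordinates commute and $U_k U_\ell = q_{\ell,k} U_\ell U_k$ exactly). For $k<m\le j$, evaluate both sides of $\hat U_k \hat U_j = q_{j,k} \hat U_j \hat U_k$ on $h\otimes e_{\mathbf n}$. The computation reduces to the algebraic identity $U_k W_{\mathbf n} = c_k(\mathbf n)\, W_{\mathbf n+e_k}$ with $c_k(\mathbf n) = \prod_{i<k} q_{i,k}^{n_i}$, which follows from the exact $\Theta$-commutation of $U_1,\dots,U_{m-1}$: the scalar $c_k(\mathbf n)$ cancels between the conjugation and the inverse of the shifted $W_{\mathbf n}$, while the recurrence $q_j^{\mathbf n+e_k} = q_{k,j}\,q_j^{\mathbf n}$ supplies exactly the factor coming from $S_k$. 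This check is independent of $\delta$ and works for every $j\ge m$, so in fact $\hat U_k$ exactly $\Theta$-commutes with every $\hat U_j$, $j\ge m$. For the remaining pairs $j,j'\ge m$, both $\hat U_j,\hat U_{j'}$ are block-diagonal in $\mathbf n$, and the commutator collapses to
\[
\hat U_j \hat U_{j'} - q_{j',j}\hat U_{j'}\hat U_j \;=\; \sum_{\mathbf n} q_j^{\mathbf n} q_{j'}^{\mathbf n}\, W_{\mathbf n}\bigl(U_jU_{j'} - q_{j',j}U_{j'}U_j\bigr) W_{\mathbf n}^{-1} \otimes p_{\mathbf n},
\]
with operator norm equal to $\|U_jU_{j'} - q_{j',j}U_{j'}U_j\| \le \delta$ because the $W_{\mathbf n}$ are unitary.

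For the compression, take $\xi_N := (2N+1)^{-(m-1)/2} \sum_{\mathbf n \in [-N,N]^{m-1}} e_{\mathbf n}$ and $\iota := \mathrm{id}_\cH \otimes \xi_N$. A direct computation gives $\iota^*\hat U_k\iota = \tfrac{2N}{2N+1}\,U_k$, so $\|U_k - \iota^*\hat U_k\iota\| = (2N+1)^{-1}$. For $j\ge m$, telescoping coordinate-by-coordinate (each step being a copy of the estimate from Lemma~\ref{lem:DACU}) gives $\|q_j^{\mathbf n} W_{\mathbf n} U_j W_{\mathbf n}^{-1} - U_j\| \le |\mathbf n|_1\,\delta$, and averaging over the box yields $\|U_j - \iota^*\hat U_j\iota\| \le \tfrac{(m-1)N(N+1)}{2N+1}\,\delta$. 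Choosing $N$ of order $\delta^{-1/2}$ (balancing the two terms, in analogy with the choice $\frac{N+1}{2}<\delta^{-1/2}<2N+1$ in Lemma~\ref{lem:DACU}) produces the claimed $\sqrt{\delta}$-type bound.

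The main subtle point is recognizing that the $j>m$ coordinates must be twisted in parallel with $j=m$. If one naively set $\hat U_j = U_j \otimes I$ for $j>m$, then on the block $p_{\mathbf n}$ the commutator $\hat U_j \hat U_m - q_{m,j}\hat U_m \hat U_j$ would involve $W_{\mathbf n}^{-1} U_j W_{\mathbf n}$, which is only $|\mathbf n|_1 \delta$-close to a scalar multiple of $U_j$; since $\mathbf n$ ranges over all of $\bZ^{m-1}$, the supremum over blocks would be unbounded, destroying assertion~(b). Placing every $\hat U_j$ ($j\ge m$) into the same block-diagonal twisted form makes the offending conjugations cancel exactly, at the cost only of a compression error that is absorbed in the $N$-averaging.
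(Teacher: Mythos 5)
Your construction is genuinely different from the paper's, and the algebra is correct, but the compression estimate does not give the claimed bound for $m\ge 4$.

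The paper works on $\cH \otimes \ell^2(\bZ)$ with a \emph{single} shift: it sets $\hat U_m = U_m \otimes S$ and twists \emph{every} other generator, including $U_1,\ldots,U_{m-1}$, by powers of $U_m$ (that is $\hat U_j = \sum_k q_{j,m}^k U_m^k U_j U_m^{-k}\otimes p_k$ for all $j\neq m$). Conjugation by $U_m^k$ is an automorphism, so the exact $\Theta$-commutation among $U_1,\ldots,U_{m-1}$ is preserved automatically, and the averaging over $[-N,N]$ is one-dimensional. You instead place $m-1$ independent shifts on $\ell^2(\bZ^{m-1})$ and twist only $U_m,\ldots,U_d$ by the words $W_{\mathbf n}$. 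Your algebraic verifications are all correct: the phase identity $U_kW_{\mathbf n}=c_k(\mathbf n)W_{\mathbf n+e_k}$ does what you want, the case $j,j'\ge m$ collapses as you say, and you even prove more than is asked, namely that each $\hat U_k$ with $k<m$ exactly $\Theta$-commutes with every $\hat U_j$, $j\ge m$, not just with $\hat U_m$. Your remark that the ``inactive'' $U_j$, $j>m$, must also be twisted is also correct — the paper does this too.

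The gap is quantitative. Averaging $|\mathbf n|_1\delta$ over $[-N,N]^{m-1}$ contributes a factor $m-1$: you get $\|U_j-\iota^*\hat U_j\iota\|\le \frac{(m-1)N(N+1)}{2N+1}\delta$, while $\|U_k-\iota^*\hat U_k\iota\|=\frac{1}{2N+1}$ exactly. The second forces $2N+1>\delta^{-1/2}$, and substituting into the first gives a bound roughly $\frac{m-1}{4}\sqrt\delta$, which is not $<\sqrt\delta$ once $m\ge 5$; and even for $m=4$ one can check that the two windows $\frac{1}{2N+1}<\sqrt\delta$ and $\frac{(m-1)N(N+1)}{2N+1}\delta<\sqrt\delta$ leave gaps in $\delta$ where no integer $N$ works. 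So ``choosing $N$ of order $\delta^{-1/2}$'' does \emph{not} produce the claimed $<\sqrt\delta$ bound for general $m$; it produces something like $\frac12\sqrt{(m-1)\delta}$. The fix is to shift a single generator (the paper shifts $U_m$, but shifting any one of $U_1,\ldots,U_{m-1}$ and twisting the rest would also do), so the averaging stays one-dimensional and the constant $\sqrt\delta$, uniform in $m$, is recovered.
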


\begin{proof}
The proof is modeled on the proof of Lemma \ref{lem:DACU}, where $U_m$ plays the role of $u$ and $U_j$ ($j \neq m$) all play the role of $v$. 
Let $S$ be the bilateral shift on $\ell^2(\mathbb Z)$ given by $S e_k=e_{k+1}$, where $\{e_k\}_{k \in \bZ}$ is the standard basis of $\ell^2(\bZ)$.
Let $p_k$ be the projection onto $\mathbb C e_k$ and define 
\[
\hat U_j:=
\begin{cases}
  U_m \otimes S & j=m,\\
  \sum q_{j,m}^kU_m^kU_jU_m^{-k}\otimes p_k & j\neq m.
\end{cases}
\]
For $i,j\neq m$, it is easy to see that $\|\hat U_i\hat U_{j}-q_{j,i}\hat U_j \hat U_i\|= \| U_i U_{j}-q_{j,i} U_j  U_i\|$.
The exact same calculations as in Lemma \ref{lem:DACU} with $u=U_m, v=U_j,q:=q_{j,m}$ for $j\neq m$ show that $\hat U_m$ has the desired commutation relation with all $\hat U_j$ and also yield the isometry $\iota$ such that $\|U-\iota \hat U \iota^*\|<\sqrt\delta$.
\end{proof}

\begin{corollary}\label{cor:Theta-commuting_dilation}
Let $Q = (\exp(i\theta_{k,\ell}))$ where $\Theta=(\theta_{k,\ell})$ is a real $d\times d$ antisymmetric matrix. Assume that $U \in \cU(d)$ is $\delta$-almost $\Theta$-commuting. 
Then there  exists a $\Theta$-commuting $V_\Theta \in \cU(d)$ such that $\distance_{\mathrm{rD}}(U\to V_\Theta) < (d-1)\sqrt{\delta}$.   
\end{corollary}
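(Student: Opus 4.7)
The plan is to iterate Lemma \ref{lem:Theta-commuting_dilation} exactly $d-1$ times, each time making one more unitary commute exactly according to $\Theta$, while preserving the $\delta$-almost $\Theta$-commutation of the full tuple. I will keep track of the accumulated compression errors by telescoping.

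More precisely, I set $U^{(1)} := U$ and trivially $U_1^{(1)}$ satisfies the (empty) commutation relations from the $1\times 1$ submatrix $(\theta_{k,\ell})_{k,\ell=1}^1$, while all pairs $\delta$-almost commute by assumption. Inductively, suppose $U^{(m-1)}$ is a unitary $d$-tuple on some Hilbert space $\cH_{m-1}$ whose first $m-1$ entries commute according to $(\theta_{k,\ell})_{k,\ell=1}^{m-1}$ and whose entries satisfy $\|U_i^{(m-1)}U_j^{(m-1)} - q_{j,i}U_j^{(m-1)}U_i^{(m-1)}\|\leq \delta$ for all $1\leq i,j\leq d$. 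Apply Lemma \ref{lem:Theta-commuting_dilation} to $U^{(m-1)}$ with index $m$ to obtain a Hilbert space $\cH_m$, an isometry $\iota_{m-1}\colon \cH_{m-1}\to \cH_m$, and a unitary $d$-tuple $U^{(m)}\in B(\cH_m)^d$ whose first $m$ entries commute according to $(\theta_{k,\ell})_{k,\ell=1}^m$, whose full tuple is still $\delta$-almost $\Theta$-commuting, and such that
\[
\|U^{(m-1)} - \iota_{m-1}^*\, U^{(m)}\, \iota_{m-1}\| < \sqrt{\delta}.
\]
After $d-1$ iterations, $V_\Theta := U^{(d)}$ is a genuine $\Theta$-commuting unitary $d$-tuple.

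To estimate $\distance_{\mathrm{rD}}(U\to V_\Theta)$, I define the UCP compression maps $\Phi_m(\cdot) := \iota_m^*\,(\cdot)\,\iota_m$ and consider the composition $\Psi := \Phi_1\circ \Phi_2 \circ \cdots \circ \Phi_{d-1}$, which is again UCP (applied entrywise to tuples). Since each $\Phi_m$ is a compression by an isometry, $\Psi(V_\Theta) \prec V_\Theta$. Using that each $\Phi_j$ is completely contractive and telescoping,
\begin{align*}
\|U - \Psi(V_\Theta)\|
&= \Bigl\| \sum_{m=1}^{d-1} \Phi_1\circ\cdots\circ \Phi_{m-1}\bigl(U^{(m)} - \Phi_m(U^{(m+1)})\bigr)\Bigr\| \\
&\leq \sum_{m=1}^{d-1} \|U^{(m)} - \Phi_m(U^{(m+1)})\| < (d-1)\sqrt{\delta},
\end{align*}
which gives $\distance_{\mathrm{rD}}(U\to V_\Theta) < (d-1)\sqrt{\delta}$ as required.

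The only point that needs any care is the observation that Lemma \ref{lem:Theta-commuting_dilation} preserves (rather than degrades) the $\delta$-almost $\Theta$-commutation constant, so that the same $\delta$ can feed the next iteration; this is the content of the second bullet in the conclusion of that lemma. Otherwise the argument is a routine induction plus a triangle inequality, so I do not anticipate a real obstacle.
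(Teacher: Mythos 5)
Your proof is correct and follows essentially the same route as the paper's, which also iterates Lemma~\ref{lem:Theta-commuting_dilation} for $m=2,\ldots,d$ and accumulates the compression error; you have simply made the telescoping estimate explicit, which the paper leaves implicit.
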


\begin{proof}
We iterate \Cref{lem:Theta-commuting_dilation} $(d-1)$ times for $m=2,\ldots d$, and the unitary tuple $\hat U$ obtained in the last step is the desired $\Theta$-commuting tuple $V_\Theta$. 
\end{proof}

\subsection{Gauge invariant tuples and applications to universal tuples}

A unitary tuple $U$ is called \emph{gauge invariant} if $U\sim \lambda U:=(\lambda_1U_1,\ldots \lambda_d U_d)$ for every $\lambda \in \mathbb T^d$. Clearly, $U$ is gauge invariant if and only if $U\sim U\otimes z:=(U_1\otimes z_1,\ldots, U_d\otimes z_d)$, where $z$ is the universal commuting unitary tuple. 

If $U$ is a $\Theta$-commuting unitary tuple, then $U \sim u_\Theta$ is the universal $\Theta$-commuting tuple if and only if it is gauge invariant (see \cite[Lemma 4.1]{GPSS21}).

\begin{lemma}\label{lem:gi}
If $U$ is gauge invariant then the $\Theta$-commuting dilation $V_\Theta$ constructed in Corollary \ref{cor:Theta-commuting_dilation} is also gauge invariant. 
\end{lemma}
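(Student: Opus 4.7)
The plan is to reduce to the single-step case and show that one step of the construction preserves gauge invariance. Since $V_\Theta$ is obtained by iterating \Cref{lem:Theta-commuting_dilation} exactly $d-1$ times, it suffices to prove: if $U$ is gauge invariant, then the output $\hat U$ of \Cref{lem:Theta-commuting_dilation} is also gauge invariant. The full claim then follows by induction on the iteration.

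The key observation is that the single-step dilation formula is equivariant with respect to the gauge action. Writing $\hat U(\cdot)$ for the construction as a function of the input tuple, a direct substitution shows that
\[
\hat U(\lambda U) = \lambda\,\hat U(U) \qquad \text{for every } \lambda \in \mathbb T^d.
\]
Indeed $\hat U_m(\lambda U) = (\lambda_m U_m)\otimes S = \lambda_m\hat U_m(U)$, while for $j\neq m$ the scalars $\lambda_m^k$ and $\lambda_m^{-k}$ inside $\sum_k q_{j,m}^k(\lambda_m U_m)^k(\lambda_j U_j)(\lambda_m U_m)^{-k}\otimes p_k$ telescope away, leaving exactly $\lambda_j\hat U_j(U)$.

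To conclude, fix $\lambda \in \mathbb T^d$. Gauge invariance of $U$ provides a $*$-isomorphism $\phi\colon C^*(U) \to C^*(\lambda U)$ with $\phi(U_i)=\lambda_i U_i$. I would extend $\phi$ normally to the enveloping W*-algebras and tensor with the identity on $B(\ell^2(\mathbb Z))$, obtaining a normal $*$-isomorphism that sends each SOT-convergent expression $\hat U_j(U)$ to $\hat U_j(\lambda U) = \lambda_j\hat U_j(U)$. Restricting to the $C^*$-algebra $C^*(\hat U)$ then yields a $*$-isomorphism onto $C^*(\lambda\hat U)$, witnessing $\hat U \sim \lambda\hat U$ and thus the gauge invariance of $\hat U$.

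I expect the transport of $\phi$ across the infinite sums defining $\hat U_j$ to be the main technical nuisance, since those sums converge only in SOT and do not lie in $C^*(U) \otimes_{\min} B(\ell^2(\mathbb Z))$, forcing one to leave the $C^*$-level. The standard remedy is the normal extension to enveloping W*-algebras; once this is set up, the equivariance identity $\hat U(\lambda U) = \lambda\hat U(U)$ does the rest of the work.
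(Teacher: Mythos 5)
The paper offers no argument beyond the single sentence "This follows from the construction," so your write-up is necessarily a different route, and your equivariance identity $\hat U(\lambda U)=\lambda\,\hat U(U)$ is exactly the structural observation the terse proof is alluding to; the telescoping computation is correct.

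The transport step, however, has a gap as stated. The bitranspose $\phi^{**}$ is a normal isomorphism of the universal enveloping von Neumann algebras $C^*(U)^{**}\to C^*(\lambda U)^{**}$, but the SOT-convergent sums defining $\hat U_j$ live in the \emph{concrete} von Neumann tensor product $C^*(U)''\,\bar\otimes\,B(\ell^2(\bZ))\subset B(\cH\otimes\ell^2(\bZ))$, and $C^*(U)''$ is a quotient of $C^*(U)^{**}$, not a subalgebra; so $\phi^{**}\bar\otimes\mathrm{id}$ cannot be evaluated at $\hat U_j$. What one would need is for $\phi$ to extend normally to the concrete $C^*(U)''$, and this can fail for a generic representative: for multiplication by $z$ on $L^2(\bT,\mu)$ with $\mu=\mathrm{Leb}+\delta_1$ (a gauge invariant unitary since $\supp\mu=\bT$), rotation by $\lambda\neq 1$ sends bump approximations of $\mathbf{1}_{\{1\}}$ to bump approximations of $\mathbf{1}_{\{\lambda\}}$, so a normal extension would have to send a nonzero projection to $0$. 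The gap is repairable by choosing the representative of $U$ wisely before running the construction, which is legitimate since the statement of Corollary \ref{cor:Theta-commuting_dilation} concerns only $\sim$-classes: either pass to the universal representation of $C^*(U)$, so that the concrete double commutant \emph{equals} $C^*(U)^{**}$ and your $\phi^{**}$ applies; or, more elementarily, replace $U$ by $U\otimes z\sim U$ with $z$ the universal commuting tuple realized on $L^2(\bT^d)$, so that the gauge action becomes spatially implemented by $1\otimes W_\lambda$ with $W_\lambda$ translation on $L^2(\bT^d)$. In the latter case the construction manifestly commutes with unitary conjugation on the base Hilbert space, so $1\otimes W_\lambda\otimes 1_{\ell^2(\bZ)}$ implements the gauge action on $\hat U$ directly, this persists through all $d-1$ iterations, and $V_\Theta$ is gauge invariant with no W*-machinery needed.
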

\begin{proof}
This follows from the construction. 
\end{proof}

Given $\Theta=(\theta_{k,\ell})$ and $Q = (\exp(i\theta_{k,\ell}))$ as above and $\delta > 0$, we let $u_{\Theta,\delta}$ be the universal unitary tuple $(u_1, \ldots, u_d)$ that satisfies
\be\label{eq:almost_q_comm}
\|u_\ell u_k - e^{i\theta_{k,\ell}}u_k u_\ell \| \leq \delta,
\ee
for all $ k,\ell = 1, \ldots, d$.
Clearly, $u_{\Theta, \delta}$ is gauge invariant and $\delta$-almost $\Theta$-commuting. 
Henceforth we shall ignore the care $\delta \geq 1$ since in that case the assumption is not very exciting and the result trivial. 

\begin{theorem}\label{thm:dist_uthetadelta}
The universal $\Theta$-commuting unitary tuple $u_\Theta$ and the universal $\delta$-almost $\Theta$-commuting unitary tuple $u_{\Theta,\delta}$ satisfy
\[
{\mathrm d}_{\mathrm{HR}}(u_\Theta, u_{\Theta,\delta}) \leq 10 \sqrt{(d-1)\sqrt{\delta}} .
\]
\end{theorem}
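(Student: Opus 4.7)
The plan is to assemble three ingredients developed in the excerpt: Corollary \ref{cor:Theta-commuting_dilation} (dilation of an almost $\Theta$-commuting tuple to a genuine $\Theta$-commuting one), Lemma \ref{lem:gi} (the dilation preserves gauge invariance), and Theorem \ref{thm:dHRleqdD} (the quantitative comparison $\distance_{\mathrm{HR}}\leq 10\distance_{\mathrm{rD}}^{1/2}$). The key observation that makes everything work is that $u_{\Theta,\delta}$ is gauge invariant by construction, so the $\Theta$-commuting dilation supplied by Corollary \ref{cor:Theta-commuting_dilation} will also be gauge invariant and therefore $*$-isomorphic to $u_\Theta$.

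Concretely, I would first bound one direction. Since $u_{\Theta,\delta}$ is $\delta$-almost $\Theta$-commuting, Corollary \ref{cor:Theta-commuting_dilation} produces a $\Theta$-commuting unitary tuple $V_\Theta$ with $\distance_{\mathrm{rD}}(u_{\Theta,\delta}\to V_\Theta)<(d-1)\sqrt{\delta}$. Because $u_{\Theta,\delta}$ is gauge invariant, Lemma \ref{lem:gi} gives that $V_\Theta$ is gauge invariant as well, and the characterization recalled just before Lemma \ref{lem:gi} then yields $V_\Theta\sim u_\Theta$. Thus $\distance_{\mathrm{rD}}(u_{\Theta,\delta}\to u_\Theta)<(d-1)\sqrt{\delta}$. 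For the reverse direction, $u_\Theta$ trivially satisfies \eqref{eq:almost_q_comm}, so by universality of $u_{\Theta,\delta}$ there is a surjective $*$-homomorphism $\rho\colon C^*(u_{\Theta,\delta})\to C^*(u_\Theta)$ with $\rho(u_{\Theta,\delta})=u_\Theta$. Since $\rho$ is in particular UCP, the reformulation $\distance_{\mathrm{rD}}(A\to B)=\inf_{\Psi(B)\prec B}\|A-\Psi(B)\|$ given after the definition of $\distance_{\mathrm{rD}}$ allows us to take $\Psi=\rho$ and conclude $\distance_{\mathrm{rD}}(u_\Theta\to u_{\Theta,\delta})=0$.

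Combining the two one-sided bounds, I get $\distance_{\mathrm{rD}}(u_\Theta,u_{\Theta,\delta})<(d-1)\sqrt{\delta}$, and a direct application of Theorem \ref{thm:dHRleqdD} yields
\[
\distance_{\mathrm{HR}}(u_\Theta,u_{\Theta,\delta})\leq 10\,\distance_{\mathrm{rD}}(u_\Theta,u_{\Theta,\delta})^{1/2}\leq 10\sqrt{(d-1)\sqrt{\delta}}.
\]
I do not anticipate a genuine obstacle in this argument, since all the heavy lifting was already done in Section \ref{sec:dilation} and Theorem \ref{thm:dHRleqdD}. The only point that needs a moment of care is the vanishing of $\distance_{\mathrm{rD}}(u_\Theta\to u_{\Theta,\delta})$, where one has to identify the universal $*$-homomorphism produced by the universal property of $u_{\Theta,\delta}$ with a UCP compression in the sense of the relaxed dilation formalism.
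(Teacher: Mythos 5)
Your proof is correct and follows essentially the same route as the paper: one direction comes from iterating the dilation construction (Corollary \ref{cor:Theta-commuting_dilation}) together with the gauge-invariance preservation of Lemma \ref{lem:gi}, and the other direction from the fact that $u_\Theta \prec u_{\Theta,\delta}$, after which Theorem \ref{thm:dHRleqdD} finishes. The only cosmetic difference is that you justify $\distance_{\mathrm{rD}}(u_\Theta\to u_{\Theta,\delta})=0$ via the universal $*$-homomorphism $\rho\colon C^*(u_{\Theta,\delta})\to C^*(u_\Theta)$, whereas the paper invokes the description of $u_{\Theta,\delta}$ as a direct sum over all $\delta$-almost $\Theta$-commuting tuples (so that $u_\Theta$ is literally a direct summand); these are the same observation, and your cleaner statement that this one-sided distance vanishes is in fact what the paper's argument requires.
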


\begin{proof}
It is clear that $u_\Theta \prec u_{\Theta,\delta}$ (because one way to define $u_{\Theta,\delta}$ is to take the direct sum over all tuples that $\Theta$-commute up to an error of at most $\delta$, and clearly $u_\Theta$ satisfies this), thus $\distance_{\mathrm{rD}}(U_\Theta \to U_{\Theta,\delta}) \leq 1$. 
On the other hand, by Corollary \ref{cor:Theta-commuting_dilation} and Lemma \ref{lem:gi}, we have that $\distance_{\mathrm{rD}}(U_{\Theta, \delta} \to U_\Theta)< (d-1)\sqrt{\delta}$. 
Combining the one sided estimates, we have 
$\distance_{\mathrm{rD}}(u_\Theta,u_{\Theta,\delta}) < (d-1)\sqrt{\delta}$. 
Invoking Theorem \ref{thm:dHRleqdD} we obtain the desired result. 
\end{proof}

In \cite[Section 5]{GPSS21} it was shown that the family $\{u_\Theta\}$ is continuous in $\cU(d)$ with respect to any of the metrics, in the sense that if $\Theta$ and $\Theta'$ are close then ${\mathrm d}_{\mathrm{HR}}(u_\Theta,u_{\Theta'})$ is small. 
Since a $\Theta'$-commuting tuple is almost $\Theta$-commuting, we can recover this result with a somewhat weaker quantitative control.

\begin{theorem}\label{thm:contQ}
For the universal $\Theta$ and $\Theta'$ commuting unitary tuples, we have 
\[
{\mathrm d}_{\mathrm{HR}}(u_\Theta, u_{\Theta'}) \leq 10 \sqrt{(d-1)\sqrt{\delta}} .
\]
for $\delta = \max\{|q_{k,\ell} - q'_{k,\ell}| \colon k,\ell = 1, \ldots\}$. 
\end{theorem}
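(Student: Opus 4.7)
The plan is to reduce Theorem \ref{thm:contQ} essentially to Theorem \ref{thm:dist_uthetadelta} by observing that any $\Theta'$-commuting tuple is automatically $\delta$-almost $\Theta$-commuting, where $\delta = \max_{k,\ell}|q_{k,\ell} - q'_{k,\ell}|$. Concretely, if $U$ is $\Theta'$-commuting then
\[
U_\ell U_k - q_{k,\ell} U_k U_\ell = (q'_{k,\ell} - q_{k,\ell}) U_k U_\ell,
\]
so $\|U_\ell U_k - q_{k,\ell} U_k U_\ell\| = |q'_{k,\ell} - q_{k,\ell}| \leq \delta$. Thus $u_{\Theta'}$, being $\Theta'$-commuting, is $\delta$-almost $\Theta$-commuting; and being the universal object, it is gauge invariant.

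First, I would apply Corollary \ref{cor:Theta-commuting_dilation} to $u_{\Theta'}$ viewed as a $\delta$-almost $\Theta$-commuting tuple to obtain a $\Theta$-commuting unitary tuple $V$ with
\[
\distance_{\mathrm{rD}}(u_{\Theta'} \to V) < (d-1)\sqrt{\delta}.
\]
Next, by Lemma \ref{lem:gi}, the dilation construction preserves gauge invariance, so $V$ is gauge invariant. Since $V$ is both $\Theta$-commuting and gauge invariant, the universal property (via \cite[Lemma 4.1]{GPSS21}, as cited after the statement of gauge invariance in the excerpt) gives $V \sim u_\Theta$, and hence
\[
\distance_{\mathrm{rD}}(u_{\Theta'} \to u_\Theta) < (d-1)\sqrt{\delta}.
\]

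Then I would swap the roles of $\Theta$ and $\Theta'$: since $\delta$ is symmetric in $Q$ and $Q'$, the identical argument yields
\[
\distance_{\mathrm{rD}}(u_\Theta \to u_{\Theta'}) < (d-1)\sqrt{\delta},
\]
and combining the two one-sided estimates gives $\distance_{\mathrm{rD}}(u_\Theta, u_{\Theta'}) \leq (d-1)\sqrt{\delta}$. Finally, Theorem \ref{thm:dHRleqdD} upgrades this bound to
\[
{\mathrm d}_{\mathrm{HR}}(u_\Theta, u_{\Theta'}) \leq 10\,\distance_{\mathrm{rD}}(u_\Theta, u_{\Theta'})^{1/2} \leq 10\sqrt{(d-1)\sqrt{\delta}},
\]
as claimed.

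There is no serious obstacle in this argument; the only minor subtlety worth flagging is making sure that gauge invariance is transported through the dilation step and that the universal $\Theta$-commuting tuple is characterized (up to equivalence) precisely by being $\Theta$-commuting and gauge invariant, both of which are supplied by Lemma \ref{lem:gi} and the cited \cite[Lemma 4.1]{GPSS21}. Otherwise the proof is just an application of the tools already developed, and the bound comes out identical to that of Theorem \ref{thm:dist_uthetadelta} with $\delta$ now interpreted as the sup-distance between $Q$ and $Q'$.
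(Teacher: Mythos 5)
Your proof is correct and takes essentially the same route as the paper: observe that a $\Theta'$-commuting tuple is $\delta$-almost $\Theta$-commuting, apply Corollary~\ref{cor:Theta-commuting_dilation} together with Lemma~\ref{lem:gi} to get $\distance_{\mathrm{rD}}(u_{\Theta'} \to u_\Theta) < (d-1)\sqrt{\delta}$, use the symmetry in $\Theta,\Theta'$ for the other direction, and finish with Theorem~\ref{thm:dHRleqdD}. You merely spell out a couple of steps (gauge invariance of the dilation, identification of the result with $u_\Theta$) that the paper compresses into citations.
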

\begin{proof}
Note that if $u,v$ are $q$-commuting unitaries and $q' \in \bT$, then 
\begin{align*}
\|vu - q' uv\| &= \|vu - quv\| + \|quv - q'uv\| \\
&= 0 + |q-q'|\|uv\| \\
&= |q-q'|. 
\end{align*}
It follows that $u_{\Theta'} = (u_1, \ldots, u_d)$ satisfies \eqref{eq:almost_q_comm}. 
by Corollary \ref{cor:Theta-commuting_dilation} and Lemma \ref{lem:gi}, we have that $\distance_{\mathrm{rD}}(U_{\Theta'} \to U_\Theta)< (d-1)\sqrt{\delta}$. 
Symmetrically, we have that $\distance_{\mathrm{rD}}(U_{\Theta} \to U_{\Theta'})< (d-1)\sqrt{\delta}$. 
Once more, Theorem \ref{thm:dHRleqdD} gives the desired result. 
\end{proof}

\section{Approximating almost $\Theta$-commuting tuples by $\Theta$-commuting tuples}\label{sec:approximation}

In this section our goal is to prove that, under certain assumptions, an almost $\Theta$-commuting unitary tuple $U \in B(\cH)^d$ can be approximated with a $\Theta$-commuting tuple in the sense that there exists a $\Theta$-commuting unitary tuple $V_\Theta \in B(\cH \otimes \ell^2)$ such that $\|U \otimes \mathrm{id}_{\ell^2} - V_\Theta\|$ is small\footnote{Note the difference from Theorem \ref{thm:dist_uthetadelta}: here we are interested in approximating a particular given almost $\Theta$-commuting tuple, and not the universal one.}. 
By Theorem \ref{thm:dHRleqdD} and Remark \ref{rem:inf_amp}, it suffices to find some $\Theta$-commuting $V_\Theta \in \cU(d)$ such that 
$\distance_{\mathrm{rD}}(V_\Theta,U)$ is small. 
By Corollary \ref{cor:Theta-commuting_dilation}, we know that given $U$ that is almost $\Theta$-commuting, we can ``almost dilate" $U$ to a tuple $V_\Theta$ such that $\distance_{\mathrm{rD}}(U\to V_\Theta)$ is small. 
Thus, our strategy now will be to start from the dilation $V_\Theta$ given the proof of \ref{cor:Theta-commuting_dilation}, and to dilate it further to an operator that we hope to relate to $U$ somehow. 

\subsection{The basic reverse dilation construction}

\begin{lemma}\label{lem:DACU_inverse}
Let $q$, $u$ and $v$ as in Lemma \ref{lem:DACU}, and let $\tilde{u},\tilde{v}$ be the unitaries constructed in the proof of that lemma.
Define the unitaries $\tilde{\tilde{u}}$ and $\tilde{\tilde{v}}$ on $\cH \otimes \ell^2(\bZ) \otimes \ell^2(\bZ)$ by
\[
\tilde{\tilde{u}} = \tilde{u} \otimes S = u \otimes S \otimes S
\]
and 
\begin{align*}
\tilde{\tilde{v}} 
&= \sum_m q^{-m} (u \otimes \mathrm{id})^{-m} \tilde{v} (u \otimes \mathrm{id})^{m} \otimes p_m \\
&= \sum_{m,k}  q^{k-m} u^{k-m}vu^{m-k}\otimes p_{k} \otimes p_m  .
\end{align*}
Then the pair $(\tilde{\tilde{u}}, \tilde{\tilde{v}})$ is equivalent to a direct sum $\oplus_{(\lambda, \mu)\in \Lambda} (\lambda u,\mu v)$ where $\Lambda$ is a sequence of points in $\bT^2$. 
Moreover
\[
\distance_{\mathrm{rD}}((\tilde{u},\tilde{v}) \to (\tilde{\tilde{u}}, \tilde{\tilde{v}})) < \sqrt{\delta}. 
\]
\end{lemma}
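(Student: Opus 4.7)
The proof splits naturally into two parts: (1) identifying the structure of $(\tilde{\tilde{u}},\tilde{\tilde{v}})$ as a direct sum of rotated copies of $(u,v)$, and (2) estimating the one-sided dilation distance via the same compression technique used in Lemma~\ref{lem:DACU}. The first form of $\tilde{\tilde{v}}$ (with the conjugation by $(u\otimes\mathrm{id})^m$) will be useful in the distance estimate, while the second, fully expanded form will reveal the block decomposition.

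\textbf{Part 1: Direct sum decomposition.} The key observation is the orthogonal decomposition $\ell^2(\bZ)\otimes\ell^2(\bZ) = \bigoplus_{n\in\bZ}\cM_n$ along ``diagonals'', where $\cM_n = \overline{\mathrm{span}}\{e_{n+m}\otimes e_m : m\in\bZ\}$. A direct inspection of the basis action shows that both $\tilde{\tilde{u}} = u\otimes S\otimes S$ and $\tilde{\tilde{v}} = \sum_{m,k} q^{k-m}u^{k-m}vu^{m-k}\otimes p_k\otimes p_m$ leave each $\cH\otimes\cM_n$ invariant. Identifying $\cM_n\cong\ell^2(\bZ)$ via $e_{n+m}\otimes e_m\leftrightarrow e_m$, the restriction of the pair is unitarily equivalent to $(u\otimes S,\ q^n u^n v u^{-n}\otimes\mathrm{id})$; a further conjugation by $u^{-n}\otimes\mathrm{id}$ turns this into $(u\otimes S,\ q^n v\otimes\mathrm{id})$ on $\cH\otimes\ell^2(\bZ)$. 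Diagonalising $S$ via the Fourier transform $\ell^2(\bZ)\cong L^2(\bT)$, this pair is the direct integral $\int_{\bT}^{\oplus}(\lambda u,\, q^n v)\,d\lambda$. For any countable dense $\Lambda_0\subset\bT$, the continuity of $\lambda\mapsto\|p(\lambda u, q^n v)\|$ on the compact set $\bT$ gives
\[
\sup_{\lambda\in\bT}\|p(\lambda u, q^n v)\| = \sup_{\lambda\in\Lambda_0}\|p(\lambda u, q^n v)\|
\]
for every $*$-polynomial $p$, so the direct integral and $\bigoplus_{\lambda\in\Lambda_0}(\lambda u, q^n v)$ generate isomorphic C*-algebras via a generators-to-generators isomorphism. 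Summing over $n\in\bZ$ with $\Lambda := \Lambda_0\times\{q^n : n\in\bZ\}\subset\bT^2$ establishes the equivalence.

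\textbf{Part 2: Dilation distance estimate.} Following the model of Lemma~\ref{lem:DACU}, let $\xi_N := \frac{1}{\sqrt{2N+1}}\sum_{m=-N}^{N}e_m$ and set $\iota := \mathrm{id}_{\cH\otimes\ell^2(\bZ)}\otimes\xi_N$, an isometry onto a subspace of $\cH\otimes\ell^2(\bZ)\otimes\ell^2(\bZ)$. The map $\Psi(a):=\iota^*a\iota$ is UCP, so it suffices to bound $\|\tilde{u}-\iota^*\tilde{\tilde{u}}\iota\|$ and $\|\tilde{v}-\iota^*\tilde{\tilde{v}}\iota\|$. The $\tilde{u}$-estimate is immediate: $\iota^*\tilde{\tilde{u}}\iota=\frac{2N}{2N+1}\tilde{u}$, giving norm difference $\frac{1}{2N+1}$. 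For $\tilde{v}$, writing $B_m := q^{-m}(u\otimes\mathrm{id})^{-m}\tilde{v}(u\otimes\mathrm{id})^m$ so that $B_0=\tilde{v}$ and $\iota^*\tilde{\tilde{v}}\iota=\frac{1}{2N+1}\sum_{m=-N}^N B_m$, one checks the recursion $B_m = q^{-1}(u\otimes\mathrm{id})^{-1} B_{m-1}(u\otimes\mathrm{id})$. Hence
\[
\|B_m-B_{m-1}\| = \|B_{m-1}(u\otimes\mathrm{id}) - q(u\otimes\mathrm{id})B_{m-1}\|,
\]
and an induction using the same recursion reduces this to $\|\tilde{v}(u\otimes\mathrm{id}) - q(u\otimes\mathrm{id})\tilde{v}\|$, which a direct computation shows equals $\|vu-quv\|\leq\delta$ (the commutator telescopes to $\sum_k q^k u^k(vu-quv)u^{-k}\otimes p_k$). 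Summing telescopically gives $\|B_m-\tilde{v}\|\leq|m|\delta$, so
\[
\|\tilde{v}-\iota^*\tilde{\tilde{v}}\iota\| \leq \tfrac{1}{2N+1}\sum_{m=-N}^{N}|m|\delta = \tfrac{N(N+1)}{2N+1}\delta .
\]
Choosing $N$ with $\frac{N+1}{2}<\delta^{-1/2}<2N+1$, as in Lemma~\ref{lem:DACU}, forces both error terms below $\sqrt{\delta}$.

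\textbf{Main obstacle.} Part~2 is essentially a recapitulation of Lemma~\ref{lem:DACU} with an elementary recursive identity; the genuinely new content is Part~1, where the crux is justifying that a direct integral over $\bT$ and a direct sum over a countable dense subset generate the \emph{same} C*-algebra up to a generators-to-generators isomorphism. This requires the continuity argument above, which is the main non-routine step.
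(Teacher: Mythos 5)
Your proof is correct and follows the same overall strategy as the paper: decompose $\cH\otimes\ell^2(\bZ)\otimes\ell^2(\bZ)$ into the diagonal reducing subspaces (your $\cM_n$, the paper's $\cH_\ell$ with $\ell=-n$), identify the restriction of $(\tilde{\tilde u},\tilde{\tilde v})$ on each as a rotated copy $(u\otimes S,\ q^n v\otimes\mathrm{id})$ after conjugating by a power of $u$, and then compress with $\mathrm{id}\otimes\xi_N$ exactly as in Lemma \ref{lem:DACU}. The one place you add genuine value is the final identification in Part 1: the paper simply asserts that the direct sum $\bigoplus_n(u\otimes S,\ q^n v\otimes\mathrm{id})$ is ``readily'' equivalent to $\bigoplus_{(\lambda,\mu)\in\Lambda}(\lambda u,\mu v)$, whereas you make explicit the Fourier-transform diagonalization of $S$ into $\int_\bT^\oplus(\lambda u, q^n v)\,d\lambda$ and the continuity argument that lets one replace the essential supremum over $\bT$ by a supremum over a countable dense subset, so that the generators-to-generators map is norm-preserving on $*$-polynomials and hence extends to the required $*$-isomorphism. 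That step is worth spelling out. In Part 2 your recursion $B_m=q^{-1}(u\otimes\mathrm{id})^{-1}B_{m-1}(u\otimes\mathrm{id})$ with the telescoping identity $\tilde v(u\otimes\mathrm{id})-q(u\otimes\mathrm{id})\tilde v=\sum_k q^k u^k(vu-quv)u^{-k}\otimes p_k$ is an equivalent, self-contained rederivation of the bound $\frac{N(N+1)}{2N+1}\delta$ which the paper instead obtains by conjugation-invariance and direct citation of the estimate already carried out inside Lemma \ref{lem:DACU}; both routes are sound and yield the same choice of $N$.
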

\begin{proof}
For every $\ell \in \bN$ let 
\[
\cH_\ell = \ol{\spn}\{\cH \otimes e_k \otimes e_{k + \ell} \colon k \in \bZ\} \subseteq \cH \otimes \ell^2(\bZ) \otimes \ell^2(\bZ).
\]
Then clearly every $\cH_\ell$ is a reducing subspace for $(\tilde{\tilde{u}}, \tilde{\tilde{v}})$ and 
\[
\cH \otimes \ell^2(\bZ) \otimes \ell^2(\bZ) = \oplus_{\ell \in \bZ} \cH_\ell . 
\]
We can think of $\cH_\ell$ as $\cH \otimes \ell^2(\bZ)$, and as such it carries a natural shift 
\[
\mathrm{id}_\cH \otimes S_\ell \colon h \otimes e_k \otimes e_{k+\ell} \mapsto h \otimes e_{k +1}\otimes e_{k+1+\ell} .
\]
On $\cH_\ell$ the pair $\tilde{\tilde{u}}, \tilde{\tilde{v}}$ reduces to the pair comprised of $u \otimes S_\ell$ and $q^{-\ell} u^{-\ell} v u^\ell \otimes \mathrm{id}_{\ell^2(\bZ)}$. 
We see that the pair $\tilde{\tilde{u}}, \tilde{\tilde{v}}$ is unitary equivalent to the direct sum
\[
\bigoplus_\ell (u \otimes S_\ell, v \otimes q^{-\ell}\mathrm{id}) 
\]
which readily implies that it is equivalent to $\oplus_{(\lambda, \mu) \in \Lambda} (\lambda u,\mu v)$ where $\Lambda$ is a sequence of point in $\bT \times \{q^\ell \colon \ell \in \bZ\} \subseteq \bT^2$.

Now we show that $\tilde{\tilde{u}}, \tilde{\tilde{v}}$ approximately compress to $\tilde{u}, \tilde{v}$. 
This works like in Lemma \ref{lem:DACU}, but we repeat the details to make sure. 
We again choose $N$ such that $N+1 < \delta^{-\frac{1}{2}} < 2N+1$ and we put $\xi_N:=\frac{1}{\sqrt{2N+1}}\sum_{k=-N}^N e_k \in \ell^2(\bZ)$ as in Lemma \ref{lem:DACU}. 
We compress $\tilde{\tilde{u}}, \tilde{\tilde{v}}$ with respect to $\iota:=\mathrm{id}\otimes\xi_N\colon \cH\otimes \ell^2(\mathbb Z) \to \cH\otimes \ell^2(\mathbb Z)\otimes \ell^2(\mathbb Z)$
and obtain as before
\[
\iota^* \tilde{\tilde{u}}\iota =\tilde{u}\langle S \xi_N,\xi_N\rangle = \frac{2N}{2N+1} \tilde{u}
\]
and
\[
\iota^*\tilde{\tilde{v}}\iota =\frac{1}{2N+1}\sum_{m=-N}^N  q^{-m} (u \otimes \mathrm{id})^{-m} \tilde{v} (u \otimes \mathrm{id})^{m} .
\]
This gives
\[
\|\tilde{u}-\iota^*\tilde{\tilde{u}} \iota\| = \frac{1}{2N+1}<\delta^{\frac{1}{2}}.
\]
Now observe that
\begin{align*}
\|\tilde{v} - \iota^*\tilde{\tilde{v}}\iota\| 
&\leq \sup_k \left\|q^k u^k v u^{-k} - \frac{1}{2N+1} \sum_{m=-N}^N q^{k-m} u^{k-m} v u^{m-k} \right\| \\
&= \left\|v - \frac{1}{2N+1} \sum_{m=-N}^N q^{-m} u^{-m} v u^{m} \right\| 
\end{align*}
but the last expression was already estimated in the course of the proof of Lemma \ref{lem:DACU}, and we conclude that 
\[
\|\tilde{v} - \iota^*\tilde{\tilde{v}}\iota\| < \delta^{1/2}.
\]
\end{proof}

The ``reverse" construction of Lemma \ref{lem:DACU_inverse} can also be iterated. 
If $\lambda \in \bC^d$ and $U \in \cU(d)$ then we write $\lambda U$ as a shorthand for $(\lambda_1 U_1, \ldots, \lambda_d U_d)$. 

\begin{lemma}\label{lem:Theta-commuting_dilation_inverse}
Let $Q$ and $U$ as in Lemma \ref{lem:Theta-commuting_dilation}, and let $\hat{U}$ be the unitary tuple constructed in the proof of that lemma.
Define the unitary tuple $\hat{\hat{U}}$ on $\cH \otimes \ell^2(\bZ) \otimes \ell^2(\bZ)$ by
\[
\hat{\hat{U}}_m = \hat{U}_m \otimes S = U_m \otimes S \otimes S
\]
and, for $j \neq m$, 
\begin{align*}
\hat{\hat{U}}_j 
&= \sum_r q_{j,m}^{-r} (U_m \otimes \mathrm{id})^{-r} \hat{U}_j (U_m \otimes \mathrm{id})^{r} \otimes p_r \\
&= \sum_{r,k}  q_{j,m}^{k-r} U_m^{k-r}U_jU_m^{r-k}\otimes p_{k} \otimes p_r  .
\end{align*}
Then the tuple $\hat{\hat{U}}$ is equivalent to a direct sum $\oplus_{\lambda\in \Lambda} (\lambda U)$ where $\Lambda$ is a sequence of points in $\bT^d$. 
Moreover
\[
\distance_{\mathrm{rD}}(\hat{U} \to \hat{\hat{U}}) < \sqrt{\delta}. 
\]
\end{lemma}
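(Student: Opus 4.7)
My plan is to mirror the proof of Lemma \ref{lem:DACU_inverse}, treating the two assertions (direct sum decomposition and distance estimate) separately. For the first, I would define for each $\ell \in \bZ$ the subspace
\[
\cH_\ell := \overline{\operatorname{span}}\bigl\{h \otimes e_k \otimes e_{k+\ell} : h \in \cH,\ k \in \bZ\bigr\},
\]
which is readily verified to reduce the whole tuple $\hat{\hat U}$. Under the natural identification $\cH_\ell \cong \cH \otimes \ell^2(\bZ)$ via $h \otimes e_k \otimes e_{k+\ell}\mapsto h \otimes e_k$, one obtains $\hat{\hat U}_m|_{\cH_\ell} = U_m \otimes S$ and $\hat{\hat U}_j|_{\cH_\ell} = V_j^{(\ell)} \otimes I$ for $j \neq m$, where $V_j^{(\ell)} := q_{j,m}^{-\ell} U_m^{-\ell} U_j U_m^\ell$.

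To identify each fiber with a twist of $U$, I would apply two commuting transformations. First, the inner automorphism $\operatorname{Ad}(U_m^\ell)$ of $C^*(U)$ sends $V_j^{(\ell)}$ to $q_{j,m}^{-\ell} U_j$ and fixes $U_m$. Second, the Fourier identification $\ell^2(\bZ) \cong L^2(\bT)$ turns $S$ into multiplication by $z$, so for any tuple $(A_m, \{A_j\})$ on a Hilbert space and any $*$-polynomial $p$,
\[
\|p(A_m \otimes S, \{A_j \otimes I\})\| = \sup_{z \in \bT}\|p(zA_m, \{A_j\})\| = \sup_{z \in D}\|p(zA_m, \{A_j\})\|
\]
for any countable dense $D \subseteq \bT$ (using continuity in $z$); this produces a $*$-isomorphism $C^*(A_m \otimes S, \{A_j \otimes I\}) \cong C^*\bigl(\bigoplus_{z \in D}(zA_m, \{A_j\})\bigr)$ sending generators to generators. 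Applying both operations on each $\cH_\ell$ yields $\hat{\hat U}|_{\cH_\ell} \sim \bigoplus_{z \in D}\lambda^{(\ell,z)}U$ with $\lambda_m^{(\ell,z)} = z$ and $\lambda_j^{(\ell,z)} = q_{j,m}^{-\ell}$ for $j \neq m$. Taking the direct sum over $\ell \in \bZ$ produces $\hat{\hat U} \sim \bigoplus_{\lambda \in \Lambda} \lambda U$ with $\Lambda = \{\lambda^{(\ell,z)}\}_{\ell \in \bZ,\, z \in D}\subset \bT^d$, as required.

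For the estimate $\distance_{\mathrm{rD}}(\hat U \to \hat{\hat U}) < \sqrt{\delta}$, I would copy the second half of the proof of Lemma \ref{lem:DACU_inverse} almost verbatim. Choose $N$ with $\tfrac{N+1}{2} < \delta^{-1/2} < 2N+1$, set $\xi_N := \tfrac{1}{\sqrt{2N+1}}\sum_{k=-N}^N e_k$, and let $\iota := \mathrm{id}\otimes \xi_N \colon \cH\otimes \ell^2(\bZ) \to \cH \otimes \ell^2(\bZ)\otimes \ell^2(\bZ)$. The $m$-th coordinate compresses as $\iota^* \hat{\hat U}_m \iota = \tfrac{2N}{2N+1} \hat U_m$, giving $\|\hat U_m -\iota^* \hat{\hat U}_m \iota\| < \sqrt{\delta}$. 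For $j \neq m$, a direct calculation yields
\[
\iota^* \hat{\hat U}_j \iota = \tfrac{1}{2N+1}\sum_{r=-N}^{N} q_{j,m}^{-r}(U_m\otimes I)^{-r}\hat U_j (U_m\otimes I)^{r},
\]
whose distance from $\hat U_j$ is bounded, blockwise in the $p_k$ decomposition of $\hat U_j$, by averaging the telescoping estimate $\|q_{j,m}^r U_m^r U_j U_m^{-r} - U_j\| \leq |r|\delta$ from the proof of Lemma \ref{lem:DACU}, giving the same $\sqrt{\delta}$ bound.

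The main technical subtlety is the inner-conjugation identification: because $U$ is only almost $\Theta$-commuting, the fiber operators $V_j^{(\ell)}$ are genuinely different from $U_j$, and the cleanness of the decomposition $\hat{\hat U} \sim \bigoplus_\lambda \lambda U$ depends on absorbing that discrepancy into a $*$-isomorphism of $C^*(U)$ (namely $\operatorname{Ad}(U_m^\ell)$) rather than on any norm estimate. Everything else is a direct adaptation of the pair case.
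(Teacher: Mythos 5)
Your proof is correct and follows exactly the paper's strategy: the same reducing subspaces $\cH_\ell$, the same compression $\iota = \id \otimes \xi_N$, and the same telescoping estimate. The only difference is that you spell out the step the paper leaves as ``readily implies'' (the inner conjugation by $U_m^\ell$ together with the spectral/Fourier identification of $U_m\otimes S_\ell$ with a direct sum $\bigoplus_{z\in D} z U_m$ over a countable dense $D\subset\bT$), and that elaboration is accurate.
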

\begin{proof}
For every $\ell \in \bN$ let 
\[
\cH_\ell = \ol{\spn}\{\cH \otimes e_k \otimes e_{k + \ell} \colon k \in \bZ\} \subseteq \cH \otimes \ell^2(\bZ) \otimes \ell^2(\bZ).
\]
Then clearly every $\cH_\ell$ is a reducing subspace for $\hat{\hat{U}}$ and 
\[
\cH \otimes \ell^2(\bZ) \otimes \ell^2(\bZ) = \oplus_{\ell \in \bZ} \cH_\ell . 
\]
We can think of $\cH_\ell$ as $\cH \otimes \ell^2(\bZ)$, and as such it carries a natural shift 
\[
\mathrm{id}_\cH \otimes S_\ell \colon h \otimes e_k \otimes e_{k+\ell} \mapsto h \otimes e_{k +1}\otimes e_{k+1+\ell} .
\]
On $\cH_\ell$ the tuple $\hat{\hat{U}}$ reduces to the tuple comprised of $U_m \otimes S_\ell$ and the unitaries $q_{j,m}^{-\ell} U_m^{-\ell} U_j U_m^\ell \otimes \mathrm{id}_{\ell^2(\bZ)}$ for $j\neq m$ (in the natural order). 
We see that the tuple $\hat{\hat{U}}$ is unitarily equivalent to the direct sum
\[
\bigoplus_\ell (U_1\otimes q_{1,m}^{-\ell}{\id}, \ldots, U_{m-1}\otimes q_{m-1,m}^{-\ell}{\id},  U_m \otimes S_\ell, U_{m+1}\otimes q_{m+1,m}^{-\ell}{\id} ,\ldots, , U_{d}\otimes q_{d,m}^{-\ell}{\id})
\]
which readily implies that it is equivalent to $\oplus_{\lambda  \in \Lambda} \lambda U$ where $\Lambda$ is a sequence of points in $\bT^d$.

To show that $\hat{\hat{U}}$ approximately compresses to $\hat{U}$ works like in Lemmas  \ref{lem:DACU}, \ref{lem:Theta-commuting_dilation} and \ref{lem:DACU_inverse}. 
\end{proof}

\begin{theorem}\label{thm:Utag}
Let $Q = (\exp(i\theta_{k,\ell}))$ where $\Theta=(\theta_{k,\ell})$ is a real $d\times d$ antisymmetric matrix and let $\delta \in (0,1)$. 
 Assume that $U \in \cU(d)$ such that $\|U_iU_{j}-q_{j,i}U_i U_j\| \leq \delta$ for $1\leq i,j< {d}$. 
Then there  exist $V_\Theta,U'\in \cU(d)$ such that
\begin{itemize}
\item  $V_\Theta$ is $\Theta$-commuting
\item  $U'\sim\bigoplus_{\lambda\in\Lambda} \lambda U$ for a countable set $\Lambda\subset\mathbb T^d$
\item $\distance_{\mathrm{rD}}(U\to V_\Theta) < (d-1)\sqrt{\delta}$
\item  $\distance_{\mathrm{rD}}(V_\Theta\to U') < (d-1)\sqrt{\delta}$.
\end{itemize}
\end{theorem}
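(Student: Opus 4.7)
The plan is to iterate the forward and reverse constructions of Lemmas \ref{lem:Theta-commuting_dilation} and \ref{lem:Theta-commuting_dilation_inverse}. Set $U^{(0)} := U$, and for $k = 1, \ldots, d-1$ let $U^{(k)}$ be the output of Lemma \ref{lem:Theta-commuting_dilation} applied at $m = k+1$ to $U^{(k-1)}$; then $V_\Theta := U^{(d-1)}$ is $\Theta$-commuting. The first inequality $\distance_{\mathrm{rD}}(U \to V_\Theta) < (d-1)\sqrt{\delta}$ is exactly Corollary \ref{cor:Theta-commuting_dilation}: compose the $d-1$ UCP compressions given by Lemma \ref{lem:Theta-commuting_dilation} and invoke the triangle inequality for the one-sided relaxed dilation distance (which follows because compositions of UCP maps are UCP).

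To construct $U'$, iterate the reverse construction $d-1$ times. Apply Lemma \ref{lem:Theta-commuting_dilation_inverse} to $V_\Theta = U^{(d-1)}$ at $m = d$ to obtain a tuple equivalent to $\bigoplus_{\lambda \in \Lambda_{d-2}} \lambda U^{(d-2)}$ with $\distance_{\mathrm{rD}}(V_\Theta \to \bigoplus \lambda U^{(d-2)}) < \sqrt{\delta}$. The algebraic input that makes the iteration work is the observation that scalar rotations intertwine the forward construction: applying Lemma \ref{lem:Theta-commuting_dilation} at index $m$ to the rotated tuple $\lambda V$ rather than $V$ yields $\lambda \hat V$ componentwise, because the factors $\lambda_m^{\pm k}$ arising from $V_m^{\pm k}$ cancel inside $\sum q_{j,m}^k V_m^k V_j V_m^{-k} \otimes p_k$, while $\lambda_j$ factors out. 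Consequently each summand $\lambda U^{(k)}$ is $*$-isomorphic to the output of applying the forward construction to $\lambda U^{(k-1)}$, so Lemma \ref{lem:Theta-commuting_dilation_inverse} applies to each summand with error $\sqrt{\delta}$, and the parallel compressions assemble into a single UCP compression of the direct sum. After $d-1$ iterations one obtains $U' \sim \bigoplus_{\lambda \in \Lambda} \lambda U$ for a countable $\Lambda \subset \bT^d$, and the triangle inequality yields $\distance_{\mathrm{rD}}(V_\Theta \to U') \leq (d-1)\sqrt{\delta}$.

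The main delicate point is the applicability of Lemma \ref{lem:Theta-commuting_dilation_inverse}, which is literally formulated for a tuple of the specific form produced by Lemma \ref{lem:Theta-commuting_dilation}, to the rotated summands $\lambda U^{(k)}$ appearing in the iteration. The rotation-equivariance identity above reduces this to the original setup via a $*$-isomorphism, and since $\distance_{\mathrm{rD}}$ is invariant under $*$-isomorphic relabelling while the commutation errors of $\lambda U^{(k)}$ coincide with those of $U^{(k)}$ (because $|\lambda_i| = 1$), the bound $\sqrt{\delta}$ per reverse step carries over without change.
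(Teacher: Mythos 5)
Your proof is correct and follows the paper's strategy: iterate the forward dilation (Lemma~\ref{lem:Theta-commuting_dilation}) to obtain $V_\Theta$, then iterate the reverse dilation (Lemma~\ref{lem:Theta-commuting_dilation_inverse}) to land on a direct sum of rotated copies of $U$, combining the per-step bounds via the direct-sum estimate for $\distance_{\mathrm{rD}}$ and the triangle inequality. The only cosmetic difference is in how the rotations are handled in the iteration: you argue that the forward construction is equivariant under $\lambda\in\bT^d$, so Lemma~\ref{lem:Theta-commuting_dilation_inverse} applies to each rotated summand directly, whereas the paper applies the reverse lemma once per step to the unrotated tuple and then uses the elementary invariance $\distance_{\mathrm{rD}}(\lambda A\to\lambda B)=\distance_{\mathrm{rD}}(A\to B)$ — both give the same bound $(d-1)\sqrt{\delta}$.
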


\begin{proof}
  Define $U^{(k)}$ as the $k-1$-fold iteration of Lemma \ref{lem:Theta-commuting_dilation}, so that the first $k$ operators in $U^{(k)}$ exactly commute according to $Q$ while the norm of the other commutators remains bounded by $\delta$. Put $V_\Theta:=U^{(d)}$. Then $V_\Theta$ is $\Theta$-commuting and, by triangle inequality, $\distance_{\mathrm{rD}}(U\to V_\Theta)< (d-1)\sqrt{\delta}$.
  Now we apply Lemma \ref{lem:Theta-commuting_dilation_inverse} to $U^{(k+1)}=\hat U^{(k)}$, the $k$th step of the iteration, and obtain
  \[\hat{\hat U}^{(k)}\sim \bigoplus_{\lambda_{k}\in \Lambda_{k}}\lambda_{k} U^{(k)},\quad \distance_{\mathrm{rD}}(U^{(k+1)}\to \hat{\hat{U}}^{(k)})<\sqrt{\delta}\]
  for some countable subset $\Lambda_{k}\subset \mathbb T^d$. Put $V^{(d)}:=U^{(d)}$ and, for $k<d$,
  \[V^{(k)}:=\bigoplus_{\substack{\ell\in\{k,\ldots,d-1\}\\\lambda_{\ell}\in \Lambda_{\ell}}}\lambda_{k}\cdots\lambda_{d-1} U^{(k)}\sim \bigoplus_{\substack{\ell\in\{k+1,\ldots,d-1\}\\\lambda_{\ell}\in \Lambda_{\ell}}}\lambda_{k+1}\cdots\lambda_{d-1} \hat{\hat{U}}^{(k)}.\]
  Note that, for arbitrary countable families of operator tuples $A_i\in B(\mathcal H_i^d),B_i\in B(\mathcal K_i^d)$, we have
  \begin{multline*}\textstyle
    \distance_{\mathrm{rD}}(\bigoplus_i A_i \to \bigoplus_i B_i)=\inf_{\Psi(B)\prec B}\left\|\bigoplus_i A_i-\Psi\left(\bigoplus_i B_i\right)\right\|\\\leq\sup_i\inf_{\Psi_i(B_i)\prec B_i}\|A_i-\Psi_i(B_i)\|=\sup_i \distance_{\mathrm{rD}}(A_i \to B_i)
  \end{multline*}
  Also, $\distance_{\mathrm{rD}}(\lambda A \to \lambda B)=\distance_{\mathrm{rD}}(A\to B)$ for all $\lambda\in\mathbb T^d,A\in B(\mathcal H^d),B\in\mathcal B(\mathcal K^d)$, obviously.
  Therefore,
  \[\distance_{\mathrm{rD}}(V^{(k+1)}\to V^{(k)})\leq \distance_{\mathrm{rD}}(U^{(k+1)}\to \hat{\hat{U}}^{(k)})<\sqrt{\delta}\]
  and, by the triangle inequality, $U':=V^{(1)}$ has all properties claimed in the theorem.
\end{proof}

\begin{corollary}\label{cor:gi_dist}
If $U$ is as in Theorem \ref{thm:Utag} and is gauge invariant, then 
\[
\distance_{\mathrm{rD}}(U,U_\Theta) < (d-1) \sqrt{\delta} \,\, \textrm{ and } \,\, \distance_{\mathrm{HR}}(U,U_\Theta) < 10 \sqrt{(d-1) \sqrt{\delta}}.
\] 
\end{corollary}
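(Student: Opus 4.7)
The plan is to apply Theorem~\ref{thm:Utag} to produce intermediate tuples $V_\Theta$ and $U'$, and then to use gauge invariance on both ends in order to replace $V_\Theta$ with $u_\Theta$ and $U'$ with $U$ in the resulting dilation estimates. Concretely, Theorem~\ref{thm:Utag} delivers a $\Theta$-commuting tuple $V_\Theta$ and a tuple $U' \sim \bigoplus_{\lambda\in\Lambda}\lambda U$ (with $\Lambda\subset\bT^d$ countable) satisfying
\[
\distance_{\mathrm{rD}}(U\to V_\Theta) < (d-1)\sqrt{\delta} \qquad\text{and}\qquad \distance_{\mathrm{rD}}(V_\Theta\to U') < (d-1)\sqrt{\delta}.
\]

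First I would observe that $V_\Theta$ is exactly the tuple produced by Corollary~\ref{cor:Theta-commuting_dilation} (it is built by iterating Lemma~\ref{lem:Theta-commuting_dilation} $d-1$ times), so Lemma~\ref{lem:gi} applies and $V_\Theta$ is gauge invariant. Being both gauge invariant and $\Theta$-commuting, $V_\Theta$ must be equivalent to the universal $\Theta$-commuting tuple $u_\Theta$ by the characterization recalled just before Lemma~\ref{lem:gi} (citing \cite[Lemma~4.1]{GPSS21}). Next I would verify that $U'\sim U$: gauge invariance of $U$ provides, for each $\lambda\in\bT^d$, a $*$-automorphism $\alpha_\lambda$ of $C^*(U)$ sending $U$ to $\lambda U$, and the diagonal map $a\mapsto\bigoplus_{\lambda\in\Lambda}\alpha_\lambda(a)$ is then an injective $*$-homomorphism of $C^*(U)$ whose image is generated by $\bigoplus_\lambda\lambda U$, thereby implementing an equivalence $U\sim\bigoplus_{\lambda\in\Lambda}\lambda U\sim U'$.

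Finally, since the one-sided relaxed dilation distance is defined in terms of $\sim$ and $\prec$, it is invariant under equivalence of either argument. Substituting the two equivalences above into the bounds from Theorem~\ref{thm:Utag} yields $\distance_{\mathrm{rD}}(U\to u_\Theta)<(d-1)\sqrt{\delta}$ together with $\distance_{\mathrm{rD}}(u_\Theta\to U)<(d-1)\sqrt{\delta}$, hence $\distance_{\mathrm{rD}}(U,u_\Theta)<(d-1)\sqrt{\delta}$; the HR estimate then follows immediately from Theorem~\ref{thm:dHRleqdD}. I do not anticipate any genuine obstacle here, as every ingredient has been prepared earlier; if anything requires care it is the diagonal-map argument identifying $U'$ with $U$, which relies on the fact that countably many twists of a gauge invariant tuple can be assembled into a single faithful representation.
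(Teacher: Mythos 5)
Your proof is correct and takes essentially the same approach as the paper's: identify $V_\Theta \sim u_\Theta$ via Lemma~\ref{lem:gi} and the gauge-invariance characterization of the universal $\Theta$-commuting tuple, identify $U' \sim \bigoplus_{\lambda\in\Lambda}\lambda U \sim U$ via gauge invariance of $U$, and substitute these equivalences into the one-sided bounds from Theorem~\ref{thm:Utag} before applying Theorem~\ref{thm:dHRleqdD}. The only addition you make is spelling out the diagonal $*$-homomorphism argument showing $\bigoplus_{\lambda\in\Lambda}\lambda U \sim U$, which the paper asserts without detail.
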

\begin{proof}
If $U$ is gauge invariant then so is $V_\Theta$ and constructed in the proof of Theorem \ref{thm:Utag}, hence $V_\Theta \sim U_\Theta$. 
Moreover if $U$ is gauge invariant then $U'\sim\bigoplus_{\lambda\in\Lambda} \lambda U \sim U$. 
It follows from the theorem that $\distance_{\mathrm{rD}}(U,U_\Theta) < (d-1) \sqrt{\delta}$ and therefore $\distance_{\mathrm{HR}}(U,U_\Theta) < 10 \sqrt{(d-1) \sqrt{\delta}}$ by Theorem \ref{thm:dHRleqdD}. 
\end{proof}

\subsection{Almost gauge invariant tuples}

We say that a unitary tuple $U$ is \emph{$\varepsilon$-almost gauge invariant} if $\mathrm{d_{mr}}(U,U\otimes z) \leq \varepsilon$. 
In Corollary \ref{cor:gi_dist} above we saw that gauge invariant almost $\Theta$-commuting tuples can be approximated by a $\Theta$-commuting tuple. 

\begin{lemma}\label{lem:almost_inv}
A tuple $U$ is $\varepsilon$-almost gauge invariant if and only if $\mathrm{d_{mr}}(U,\lambda U) \leq \varepsilon$ for every $\lambda \in \bT^d$. 
\end{lemma}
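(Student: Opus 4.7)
The plan is to handle each direction of the biconditional by reducing to the one-sided matrix range distance $\distance_{\mathrm{mr}}(\,\cdot\to\cdot\,)$. Two preliminary observations I will use throughout: componentwise scalar multiplication by $\lambda\in\bT^d$ is an isometry of $M_n^d$, so $\cW_n(\lambda U)=\lambda\cW_n(U)$; and evaluation of $z$ at $\lambda$ gives a $*$-homomorphism $C^*(U\otimes z)\to C^*(U)$ sending $U\otimes z$ to $\lambda U$, which yields $\cW_n(\lambda U)\subseteq \cW_n(U\otimes z)$ for every $\lambda\in\bT^d$. In particular $\cW_n(U)\subseteq \cW_n(U\otimes z)$ (take $\lambda=(1,\dots,1)$), so $\distance_{\mathrm{mr}}(U,U\otimes z)=\distance_{\mathrm{mr}}(U\otimes z\to U)$. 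From this the $(\Rightarrow)$ direction is short: if $\distance_{\mathrm{mr}}(U,U\otimes z)\leq\varepsilon$ then any $Y\in \cW_n(\lambda U)\subseteq\cW_n(U\otimes z)$ is within $\varepsilon$ of some $X\in\cW_n(U)$, giving $\distance_{\mathrm{mr}}(\lambda U\to U)\leq\varepsilon$; and given $X\in\cW_n(U)$, I apply the same observation to $\lambda^{-1}X\in\cW_n(\lambda^{-1}U)\subseteq\cW_n(U\otimes z)$ to produce $X'\in\cW_n(U)$ with $\|\lambda^{-1}X-X'\|\leq\varepsilon$, so that $\lambda X'\in\cW_n(\lambda U)$ and $\|X-\lambda X'\|=\|\lambda^{-1}X-X'\|\leq\varepsilon$.

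For $(\Leftarrow)$ the real content is to show $\distance_{\mathrm{mr}}(U\otimes z\to U)\leq\varepsilon$. Given a UCP map $\varphi\colon C^*(U\otimes z)\to M_n$ with $\varphi(U\otimes z)=Y$, I would write $\varphi=V^*\pi V$ by Stinespring and extend $\pi$ via Arveson to an infinite-multiplicity $*$-representation of $C^*(U)\otimes C(\bT^d)$ on some Hilbert space $\cK$. Let $E$ denote the spectral measure of $\pi|_{1\otimes C(\bT^d)}$ on $\bT^d$; since $\pi(U_i\otimes 1)$ commutes with $\pi(1\otimes C(\bT^d))$, it commutes with every $E(A)$. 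For $\delta>0$, I partition $\bT^d$ into Borel sets $A_j$ of diameter at most $\delta$, choose $\lambda_j\in A_j$, and set $P_j=E(A_j)$, $V_j=P_jV$, and $Q_j=(V_j^*V_j)^{1/2}$. Then $\sum_j Q_j^2=V^*V=I_n$, one has $\|\pi(z_i)-\sum_j\lambda_{j,i}P_j\|\leq\delta$, and a short computation using $[\pi(U_i),P_j]=0$ gives $\bigl\|Y_i-\sum_j\lambda_{j,i}V_j^*\pi(U_i)V_j\bigr\|\leq\delta$. A polar decomposition $V_j=\tilde V_jQ_j$, with $\tilde V_j$ extended to an isometry $\bC^n\to P_j\cK$ (possible because $\pi$ has infinite multiplicity on $P_j\cK$), produces UCP maps $\varphi_j=\tilde V_j^*\pi(\cdot)\tilde V_j\colon C^*(U)\to M_n$ satisfying $V_j^*\pi(U_i)V_j=Q_j\varphi_j(U_i)Q_j$; hence $Y$ lies within $\delta$ of the matrix convex combination $\sum_j Q_j\bigl(\lambda_j\varphi_j(U)\bigr)Q_j$, and each $\lambda_j\varphi_j(U)\in \cW_n(\lambda_j U)$. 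By hypothesis each such element is within $\varepsilon$ of some $\tilde Z^{(j)}\in\cW_n(U)$; the element $\tilde Y:=\sum_j Q_j\tilde Z^{(j)}Q_j$ lies in $\cW_n(U)$ by matrix convexity, and
\[
\|Y-\tilde Y\|\leq\delta+\varepsilon
\]
by the standard bound $\bigl\|\sum_j Q_j A_j Q_j\bigr\|\leq \sup_j\|A_j\|$ whenever $Q_j\geq 0$ and $\sum_j Q_j^2=I$. Letting $\delta\to 0$ and extracting a limit point in the (finite-dimensional, bounded, closed, hence compact) set $\cW_n(U)$ yields $\tilde Y\in\cW_n(U)$ with $\|Y-\tilde Y\|\leq\varepsilon$, as required.

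The hard part is the $(\Leftarrow)$ direction: the pointwise hypothesis $\distance_{\mathrm{mr}}(U,\lambda U)\leq\varepsilon$ must be pooled into a bound on $\distance_{\mathrm{mr}}(U\otimes z\to U)$ with the \emph{sharp} constant, and the spectral-plus-polar-decomposition step is designed to exhibit an arbitrary element of $\cW_n(U\otimes z)$ as an approximate matrix convex combination of elements drawn from $\bigcup_\lambda\cW_n(\lambda U)$. An essentially equivalent alternative would construct a direct integral $\Psi=\int^\oplus\Psi_\lambda\,d\mu(\lambda)$ from a measurable selection of UCP maps $\Psi_\lambda$ realizing $\distance_{\mathrm{rD}}(\lambda U\to U)\leq\varepsilon$, satisfying $\|\Psi(U)-U\otimes z\|\leq\varepsilon$ and $\Psi(U)\prec U$.
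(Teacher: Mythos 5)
Your proof is correct in substance, and it fills in genuine content: the paper's own proof of this lemma is the single sentence ``This follows from the definitions,'' so there is nothing to compare you against in detail. Your forward direction ($\Rightarrow$) is the easy half, and your argument via $\cW_n(\lambda U)\subseteq\cW_n(U\otimes z)$ (using the evaluation $*$-homomorphism at $\lambda$ on the inclusion $C^*(U\otimes z)\subseteq C^*(U)\otimes C(\bT^d)$) together with $\cW_n(\lambda U)=\lambda\cW_n(U)$ is exactly the right way to see it. The reverse direction ($\Leftarrow$) is where the content lies, and you are right that it amounts to showing that $\cW(U\otimes z)$ is approximately the closed matrix convex hull of $\bigcup_{\lambda}\cW(\lambda U)$; once that is in place, replacing each block $\lambda_j\varphi_j(U)\in\cW_n(\lambda_j U)$ by a nearby element of $\cW_n(U)$ and invoking matrix convexity gives the bound $\delta+\varepsilon$, and compactness of $\cW_n(U)$ lets you pass $\delta\to 0$. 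Your spectral-partition and polar-decomposition argument for the matrix convex hull claim is sound, the operator inequality $\bigl\|\sum_j Q_j A_j Q_j\bigr\|\leq\sup_j\|A_j\|$ you invoke is the standard one (Cauchy--Schwarz or a $2\times 2$-matrix trick), and the extension of $\tilde V_j$ to an isometry is legitimate because each nonzero $P_j\cK$ is infinite-dimensional once you ampliate.

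One small slip in exposition: you cannot ``extend $\pi$ via Arveson to a $*$-representation of $C^*(U)\otimes C(\bT^d)$,'' since Arveson's extension theorem produces a UCP map, not a $*$-homomorphism, and representations of $C^*(U\otimes z)$ need not extend to representations of the ambient tensor product. The standard order is the reverse of what you wrote: first extend the UCP map $\varphi$ from $C^*(U\otimes z)$ to $C^*(U)\otimes C(\bT^d)$ by Arveson, then apply Stinespring to the extension to obtain a $*$-representation $\pi$ of $C^*(U)\otimes C(\bT^d)$ and an isometry $V$ with $\varphi=V^*\pi(\cdot)V$ on $C^*(U\otimes z)$; finally ampliate $\pi$. (Equivalently, two rounds of Arveson--Stinespring reach the same place.) The remainder of your argument is then exactly as written. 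The direct-integral alternative you sketch at the end is also valid and is probably closer to what the authors had in mind when they wrote ``this follows from the definitions,'' though as you note it requires a measurable selection of UCP maps $\lambda\mapsto\Psi_\lambda$, which is not free.
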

\begin{proof}
This follows from the definitions.
\end{proof}

We will see that an almost $\Theta$-commuting unitary tuple is $\varepsilon$-almost gauge invariant, where $\varepsilon$ depends on $\Theta$. 
We shall use this to show that an almost $\Theta$-commuting unitary tuple can be approximated to a certain extent by a $\Theta$-commuting unitary tuple. 

As usual, let $\Theta=(\theta_{k,\ell})$ be a real antisymmetric $d \times d$ matrix and let $Q = (q_{k,\ell})_{k,\ell=1}^d$ where $q_{k,\ell} = \exp(i\theta_{k,\ell})$. 
We let $\bS_Q$ (or simply $\bS$ when no confusion can arise) be the subgroup of $\bT^d$ generated by the columns $\{q_{*,\ell} : \ell=1,\ldots, d\}$ of $Q$. 
We also define 
\[
\eta_Q = \distance_{\mathrm{H}}(\bT^d,\bS_Q), 
\]
where the Hausdorff distance between two subsets of $\bC^d$ is computed with respect to the underlying $\ell^\infty$ norm on $\bC^d$ given by $\|\lambda - \mu\|_\infty = \max_i|\lambda_i - \mu_i|$. 
We will say that $Q$ (or $\Theta$) is {\em ergodic} if $\eta_Q = 0$, that is, if $\bS_Q$ is dense in $\bT^d$. 

To illustrate, consider the case $d = 2$. In this case $Q = \left(\begin{smallmatrix} 1 & q \\ q^{-1} & 1 \end{smallmatrix} \right)$, and it is easy to see that $\bS_Q$ is finite if and only if $\eta_Q > 0$ and this happens if and only if $q$ is a root of unity. 
In other words, $Q$ is ergodic if and only if $q$ is not a root of unity. 

For $N \in \bN$ we let $\bS_Q(N)$ denote the subset of $\bS_Q$ that is generated by words of length at most $N$ in the columns $\{q_{*,\ell} : \ell=1,\ldots, d\}$ of $Q$ and their inverses. 
Given $\eta > \eta_Q$, we let $N_\eta = N_{Q,\eta}$ be the least integer such that 
\be\label{eq:Ndelta}
\distance_{\mathrm{H}}(\bT^d,\bS_Q(N_\eta)) <\eta. 
\ee

\begin{proposition}\label{prop:Neta}
Let $\Theta$ and $A$ be as above, and let $\eta > \eta_Q$. 
Suppose that $U \in \cU(d)$ is $\delta$-almost $\Theta$-commuting for some $\delta > 0$. 
Then $U$ is $\varepsilon$-almost gauge invariant for 
\[
\varepsilon = N_\eta \delta + \eta. 
\]
\end{proposition}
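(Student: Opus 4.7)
The plan is to exploit the fact that conjugation by $U_\ell$ approximately rotates $U_k$ by $q_{k,\ell}$, then iterate this along a suitable word in the $U_\ell$'s whose corresponding product of columns of $Q$ approximates any prescribed $\lambda\in\mathbb T^d$.

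First, by Lemma \ref{lem:almost_inv}, it suffices to show $\mathrm{d}_{\mathrm{mr}}(U,\lambda U)\leq N_\eta\delta+\eta$ for every fixed $\lambda\in\mathbb T^d$. I would begin by rewriting the almost-commutation bound as
\[
\|U_\ell U_k U_\ell^* - q_{k,\ell}U_k\|\leq \delta\quad\text{and}\quad \|U_\ell^* U_k U_\ell - q_{k,\ell}^{-1}U_k\|\leq \delta,
\]
so that conjugation by $U_\ell^{\pm 1}$ is a $\delta$-perturbation of scalar multiplication by $q_{k,\ell}^{\pm 1}$ on each $U_k$.

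Next, given $\lambda\in\mathbb T^d$, by the defining property \eqref{eq:Ndelta} of $N_\eta$, I would choose $\mu\in\mathbb S_Q(N_\eta)$ with $\|\mu-\lambda\|_\infty<\eta$. Write $\mu=\prod_{j=1}^N q_{\ast,\ell_j}^{\epsilon_j}$ with $N\leq N_\eta$ and $\epsilon_j\in\{\pm 1\}$, and set
\[
W:=U_{\ell_1}^{\epsilon_1}\cdots U_{\ell_N}^{\epsilon_N}.
\]
Then I would argue, by induction on $N$ and applying the two bounds above one layer at a time, that
\[
\|WU_k W^* - \mu_k U_k\|\leq N\delta\leq N_\eta\delta
\]
for every $k$; at each stage the new error picks up at most $\delta$, while conjugation of the previously accumulated error term by a unitary preserves its norm. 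Taking the maximum over $k$ yields $\|WUW^* - \mu U\|\leq N_\eta\delta$.

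Finally, since $WUW^*\sim U$ as $d$-tuples, we have $\mathrm{d}_{\mathrm{mr}}(U,WUW^*)=0$, while $\mathrm{d}_{\mathrm{mr}}(\mu U,\lambda U)\leq \|\mu U-\lambda U\|=\|\mu-\lambda\|_\infty<\eta$, and $\mathrm{d}_{\mathrm{mr}}(WUW^*,\mu U)\leq \|WUW^*-\mu U\|\leq N_\eta\delta$. Combining these three estimates via the triangle inequality for $\mathrm{d}_{\mathrm{mr}}$ gives $\mathrm{d}_{\mathrm{mr}}(U,\lambda U)\leq N_\eta\delta+\eta$, as required. The main (mild) obstacle is the inductive bookkeeping in step two, particularly keeping track of signs $\epsilon_j$ and ensuring that the two-sided bound on conjugation by $U_\ell^{\pm 1}$ propagates without extra loss; this is a routine triangle-inequality argument that I do not expect to cause any real difficulty.
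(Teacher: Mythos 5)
Your proof is correct and follows essentially the same route as the paper's: both establish the operator inequality $\|WU_kW^*-\mu_kU_k\|\le N\delta$ for a length-$N$ word $W$ in the $U_\ell^{\pm1}$ corresponding to $\mu\in\mathbb S_Q(N_\eta)$, and then pass to the matrix-range estimate. The only cosmetic difference is in the final step: the paper applies an arbitrary UCP map $\phi$ and bounds the point-to-set distance $\distance(\mu\phi(U),\cW(U))$ directly, whereas you invoke $\mathrm{d}_{\mathrm{mr}}(U,WUW^*)=0$ together with the triangle inequality for $\mathrm{d}_{\mathrm{mr}}$; these are the same computation packaged differently.
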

\begin{proof}
For every $\ell = 1, \ldots, d$, we have an inner $*$-automorphism $\alpha_\ell \colon C^*(U) \to C^*(U)$ given by 
\[
\alpha_\ell(A) = U_\ell A U_\ell^*. 
\]
Applying this to $U_k$ we find that 
\[
\|\alpha_\ell(U_k) - e^{i\theta_{k,\ell}}U_k \| \leq \delta, 
\]
where we have made use of \eqref{eq:del_almost_T_comm}. 
If we write $g_1, \ldots, g_d$ for the columns of $Q$, it follows by induction that for every $N$, every $M \leq N$ and every $\lambda = g_{i_1} \cdots g_{i_M} \in \bS_Q(N)$, 
\[
\|\alpha_{i_1} \circ \cdots \alpha_{i_M} (U) - \lambda U \| \leq N \delta, 
\] 
by which we mean
\[
\|\alpha_{i_1} \circ \cdots \alpha_{i_M} (U_k) - \lambda_k U_k\| \leq N \delta 
\] 
for all $k=1, \ldots, d$. 
By applying a UCP map $\phi \colon C^*(U) \to M_n$ to the above inequality, it follows that for every $X \in \cW(U)$ and every $\lambda \in \bS_Q(N)$, 
\[
\distance(\lambda X, \cW(U)) := \inf \{\|\lambda X - Y \| : Y \in \cW(U) \} \leq N \delta .
\]
Now let $\mu \in \bT^d$ and $X \in \cW(U)$. 
If $\lambda \in \bS_Q(N_\eta)$ is such that $\|\lambda - \mu\|_\infty < \eta$, then 
\[
\distance(\mu X, \cW(U)) \leq \|\mu X - \lambda X \| + \distance(\lambda X, \cW(U)) < \eta + N_\eta \delta. 
\]
From this combined with Lemma \ref{lem:almost_inv} we find that $U$ is $\varepsilon$-almost gauge invariant for $\varepsilon = \eta + N_\eta \delta$, as required. 
\end{proof}


\subsection{The main result}

\begin{theorem}\label{thm:main1}
Let $\Theta=(\theta_{k,\ell})$ be a real $d\times d$ antisymmetric matrix and let $\delta, \varepsilon \in (0,1)$. 
Assume that $U \in \cU(d)$ is $\delta$-almost $\Theta$-commuting and is $\varepsilon$-almost gauge invariant. 
Then there  exists $V_\Theta \in \cU(d)$ such that is $\Theta$-commuting such that $\distance_{\mathrm{rD}}(U,V_\Theta) < \ep + (d-1)\sqrt{\delta}$, and, consequently, 
\be\label{eq:HRleq}
\distance_{\mathrm{HR}}(U,V_\Theta) < 10 \sqrt{\ep + (d-1)\sqrt{\delta}} . 
\ee
\end{theorem}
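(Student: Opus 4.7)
The plan is to combine Theorem \ref{thm:Utag} with the hypothesis of almost gauge invariance, using the triangle inequality for $\distance_{\mathrm{rD}}$ (which is legitimate since $\distance_{\mathrm{rD}}=\distance_{\mathrm{mr}}$ by Corollary \ref{cor:deqd} and $\distance_{\mathrm{mr}}$ is a metric on $\cU(d)$). First, apply Theorem \ref{thm:Utag} to the given $U$ to obtain a $\Theta$-commuting tuple $V_\Theta$ together with an auxiliary tuple $U' \sim \bigoplus_{\lambda \in \Lambda}\lambda U$ for some countable $\Lambda \subset \bT^d$, satisfying
\[
\distance_{\mathrm{rD}}(U \to V_\Theta) < (d-1)\sqrt{\delta} \qquad \text{and} \qquad \distance_{\mathrm{rD}}(V_\Theta \to U') < (d-1)\sqrt{\delta}.
\]
This $V_\Theta$ will be the one we are after; the job left is to upgrade the second estimate to one that controls $\distance_{\mathrm{rD}}(V_\Theta \to U)$ rather than $\distance_{\mathrm{rD}}(V_\Theta \to U')$.

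The bridge is the almost gauge invariance hypothesis. By Lemma \ref{lem:almost_inv} we have $\distance_{\mathrm{mr}}(\lambda U, U) \leq \varepsilon$ for every $\lambda \in \bT^d$, and in particular for every $\lambda \in \Lambda$. The direct-sum inequality noted in the proof of Theorem \ref{thm:Utag},
\[
\distance_{\mathrm{rD}}\Bigl(\bigoplus_i A_i \to \bigoplus_i B_i\Bigr) \leq \sup_i \distance_{\mathrm{rD}}(A_i \to B_i),
\]
applied with $A_i = \lambda_i U$ and $B_i = U$ (viewing $U$ on the right as the direct sum of copies of itself indexed by $\Lambda$), then yields
\[
\distance_{\mathrm{rD}}(U' \to U) \leq \sup_{\lambda \in \Lambda}\distance_{\mathrm{rD}}(\lambda U \to U) \leq \varepsilon.
\]

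With these pieces in hand, the triangle inequality for $\distance_{\mathrm{rD}}$ gives
\[
\distance_{\mathrm{rD}}(V_\Theta \to U) \leq \distance_{\mathrm{rD}}(V_\Theta \to U') + \distance_{\mathrm{rD}}(U' \to U) < (d-1)\sqrt{\delta} + \varepsilon,
\]
while $\distance_{\mathrm{rD}}(U \to V_\Theta) < (d-1)\sqrt{\delta} \leq (d-1)\sqrt{\delta} + \varepsilon$ is already at hand. Taking the maximum of the two one-sided estimates produces $\distance_{\mathrm{rD}}(U, V_\Theta) < \varepsilon + (d-1)\sqrt{\delta}$, as required, and the inequality \eqref{eq:HRleq} follows by invoking Theorem \ref{thm:dHRleqdD}.

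The only step with any real content is the passage from $U'$ to $U$; this is where almost gauge invariance is used essentially, and the direct-sum estimate for $\distance_{\mathrm{rD}}$ is what allows the uniform bound $\varepsilon$ over all $\lambda \in \Lambda$ to be pushed through the infinite direct sum without any dependence on $|\Lambda|$. Everything else is bookkeeping via the triangle inequality and Theorems \ref{thm:Utag} and \ref{thm:dHRleqdD} already proved.
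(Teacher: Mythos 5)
Your proof is correct and follows essentially the same route as the paper: apply Theorem \ref{thm:Utag} to produce $V_\Theta$ and $U'\sim\bigoplus_{\lambda\in\Lambda}\lambda U$, use $\varepsilon$-almost gauge invariance to control the distance from $U'$ back to $U$, and close with the triangle inequality and Theorem \ref{thm:dHRleqdD}. The only difference is cosmetic: the paper tersely asserts $\distance_{\mathrm{mr}}(U,U')\leq\varepsilon$ and moves on, whereas you make explicit the underlying mechanism, namely the direct-sum estimate $\distance_{\mathrm{rD}}(\bigoplus A_i\to\bigoplus B_i)\leq\sup_i\distance_{\mathrm{rD}}(A_i\to B_i)$ with $B_i=U$ and the identification $\bigoplus_\Lambda U\sim U$, which is a welcome clarification of a step the paper leaves implicit.
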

\begin{proof}
Apply Theorem \ref{thm:Utag} to $U$ in order to find  $V_\Theta,U'\in \cU(d)$ such that
\begin{itemize}
\item  $V_\Theta$ is $\Theta$-commuting
\item  $U'\sim\bigoplus_{\lambda\in\Lambda} \lambda U$ for a countable set $\Lambda\subset\mathbb T^d$
\item $\distance_{\mathrm{rD}}(U\to V_\Theta) < (d-1)\sqrt{\delta}$
\item  $\distance_{\mathrm{rD}}(V_\Theta\to U') < (d-1)\sqrt{\delta}$.
\end{itemize}
By $\ep$-almost gauge invariance, $\mathrm{d_{mr}}(U,\lambda U) \leq \varepsilon$ for every $\lambda \in \bT^d$, therefore $\mathrm{d_{mr}}(U,U') \leq \varepsilon$. 
It follows that $\distance_{\mathrm{rD}}(V_\Theta\to U) < \ep + (d-1)\sqrt{\delta}$, and we conclude that 
\[
\distance_{\mathrm{rD}}(U, V_\Theta) < \ep + (d-1)\sqrt{\delta}. 
\]
Finally, \eqref{eq:HRleq} now follows from Theorem \ref{thm:dHRleqdD}. 
\end{proof}

One can combine Theorem \ref{thm:main1} and Proposition \ref{prop:Neta} with Remark \ref{rem:inf_amp} to obtain the following corollary  
\begin{corollary}\label{cor:main1}
Let $U$ be a $\delta$-almost $\Theta$-commuting unitary $d$-tuple on a Hilbert space $\cH$. 
For all $\eta > \eta_Q$, there exists a $\Theta$-commuting unitary tuple on $\cH \otimes \ell^2$ such that 
\be\label{eq:endgame}
\|U \otimes \mathrm{id}_{\ell^2} - V_\Theta\| < 10 \sqrt{N_\eta \delta + \eta + (d-1)\sqrt{\delta}} . 
\ee
\end{corollary}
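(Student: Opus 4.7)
The proof is essentially a direct chain combining the three results cited immediately before the statement, so my plan is to spell out that chain without introducing any new machinery.

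First, I would invoke Proposition \ref{prop:Neta}. Since $U$ is $\delta$-almost $\Theta$-commuting and $\eta > \eta_Q$, that proposition furnishes an integer $N_\eta$ (given by \eqref{eq:Ndelta}) such that $U$ is $\varepsilon$-almost gauge invariant with
\[
\varepsilon = N_\eta \delta + \eta.
\]
This is the step that uses the ergodicity data of $\Theta$: the columns of $Q$ generate a subgroup whose closure comes within $\eta$ of every point of $\bT^d$ via words of length at most $N_\eta$, and each such word transforms $U$ into $\lambda U$ up to an error controlled by $N_\eta \delta$ through iterated use of the commutation relations.

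Next, I would feed this into Theorem \ref{thm:main1} with the parameters $\delta$ and this $\varepsilon$. That theorem produces a $\Theta$-commuting $V_\Theta \in \cU(d)$ with
\[
\distance_{\mathrm{HR}}(U, V_\Theta) < 10\sqrt{\varepsilon + (d-1)\sqrt{\delta}} = 10\sqrt{N_\eta \delta + \eta + (d-1)\sqrt{\delta}}.
\]
The theorem itself is what carries the weight of the argument (via the forward dilation of Corollary \ref{cor:Theta-commuting_dilation} and the reverse construction of Theorem \ref{thm:Utag}, together with Theorem \ref{thm:dHRleqdD}), so at this point everything nontrivial has already been done.

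Finally, to promote the Haagerup--R{\o}rdam bound to a norm estimate on $\cH \otimes \ell^2$, I would apply Remark \ref{rem:inf_amp}. That remark takes the infimum defining $\distance_{\mathrm{HR}}$, realizes a near-optimal pair of faithful representations, and, by Voiculescu's theorem, replaces the representation of $C^*(U)$ by the infinite ampliation of the identity representation. The image of $V_\Theta$ under the resulting faithful representation of $C^*(V_\Theta)$ on $\cH \otimes \ell^2$ is $\Theta$-commuting (being $*$-isomorphic to $V_\Theta$), and it satisfies \eqref{eq:endgame} by the bound above. Calling this image $V_\Theta$ as well gives the claim.

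I do not expect any genuine obstacle; the only point that requires a moment of care is that in applying Remark \ref{rem:inf_amp} one must verify that the faithful image of the $\Theta$-commuting tuple remains $\Theta$-commuting, which is automatic because the relations $v_\ell v_k = e^{i\theta_{k,\ell}} v_k v_\ell$ are $*$-algebraic and preserved by any $*$-homomorphism.
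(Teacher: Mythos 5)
Your proposal is correct and follows essentially the same route the paper takes: it chains Proposition~\ref{prop:Neta} (to get $\varepsilon$-almost gauge invariance with $\varepsilon = N_\eta\delta + \eta$), Theorem~\ref{thm:main1} (to get the Haagerup--R{\o}rdam bound), and Remark~\ref{rem:inf_amp} (to realize the estimate on $\cH \otimes \ell^2$ via Voiculescu's theorem), exactly as the one-line remark in the paper indicates. Your closing observation that the $\Theta$-commutation relations are preserved under $*$-homomorphisms is a small extra check the paper leaves implicit, and it is correct.
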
 

Finally, our main result. 
\begin{theorem}\label{thm:main2}
Let $\Theta$ be an ergodic real antisymmetric $d \times d$ matrix. 
For every $\ep > 0$, there exists $\delta > 0$, such that for every $\delta$-almost commuting unitary tuple $U$ on $\cH$ there exists a $\Theta$-commuting unitary tuple on $\cH \otimes \ell^2$ such that 
\[
\|U \otimes \mathrm{id}_{\ell^2} - V_\Theta\| < \ep .
\]
\end{theorem}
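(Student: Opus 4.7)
The strategy is to deduce Theorem \ref{thm:main2} directly from Corollary \ref{cor:main1}, using the ergodicity hypothesis to make the $\eta$-term there arbitrarily small. Ergodicity of $\Theta$ is by definition the density of $\bS_Q$ in $\bT^d$, i.e.\ $\eta_Q=0$, so every $\eta>0$ is admissible in Corollary \ref{cor:main1}.

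Given $\ep>0$, I plan to choose the parameters in the correct order. First, using $\eta_Q = 0$, pick $\eta \in (0, \ep^2/300)$; this fixes a constant integer $N_\eta = N_{Q,\eta}$ from \eqref{eq:Ndelta}, depending on $\ep$ and $Q$ but not on the tuple $U$. Second, with $N_\eta$ now fixed, choose $\delta \in (0,1)$ small enough that both $N_\eta \delta < \ep^2/300$ and $(d-1)\sqrt{\delta} < \ep^2/300$. For any $U \in \cU(d)$ that is $\delta$-almost $\Theta$-commuting, Corollary \ref{cor:main1} then supplies a $\Theta$-commuting unitary tuple $V_\Theta$ on $\cH \otimes \ell^2$ satisfying
\[
\|U \otimes \mathrm{id}_{\ell^2} - V_\Theta\| < 10 \sqrt{N_\eta \delta + \eta + (d-1)\sqrt{\delta}} < 10 \sqrt{\ep^2/100} = \ep,
\]
which is precisely the conclusion of the theorem.

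I do not anticipate any serious obstacle: the genuinely analytic content lives in Proposition \ref{prop:Neta} (near $\Theta$-commutation yields near gauge invariance with quantitative control, via the inner automorphisms $\ad U_\ell$ and the polynomial-in-$N$ commutator growth) and in the dilate/undilate pair of Theorem \ref{thm:Utag} feeding into Theorem \ref{thm:dHRleqdD}. The only subtle point in the present deduction is honoring the order of quantifiers: $N_\eta$ is determined by $\eta$ before $\delta$ is chosen, so ergodicity is used essentially to shrink $\eta$ while keeping $N_\eta$ finite. In the non-ergodic (root-of-unity) case one has $\eta_Q>0$, which forces a positive lower bound on $\eta$ and blocks the argument—consistent with Voiculescu's counterexample and with the necessity of the infinite ampliation remarked upon after the Main Result.
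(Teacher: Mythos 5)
Your proof is correct and follows exactly the same route as the paper: apply Corollary \ref{cor:main1}, use ergodicity to set $\eta_Q = 0$, fix $\eta$ (and hence $N_\eta$) first, then choose $\delta$ small enough that the bound $10\sqrt{N_\eta\delta + \eta + (d-1)\sqrt{\delta}}$ drops below $\ep$. The only difference is a cosmetic split of $\ep^2/100$ into thirds rather than halves; your explicit remark about quantifier order is a sound clarification of what the paper does implicitly.
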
 
\begin{proof}
If $\Theta$ is ergodic then $\eta_Q = 0$. 
Let $\eta < \ep^2/200$. This determines the integer $N_\eta$.
Choosing $\delta > 0$ such that $N_\eta \delta + (d-1)\sqrt{\delta} < \ep^2/200$, we take another look at \eqref{eq:endgame} and notice that the proof is complete. 
\end{proof}
Obviously, the above corollary in the special case that $d = 2$ is precisely the final assertion made in the abstract of the paper. 
For a non ergodic matrix $\Theta$, $\eta_Q > 0$ and \eqref{eq:endgame} becomes 
\[
\|U \otimes \mathrm{id}_{\ell^2} - V_\Theta\| < 10 \sqrt{N_{\eta_Q} \delta + \eta_Q + (d-1)\sqrt{\delta}} . 
\]

\begin{example}\label{ex:standard_q}
Let $q = e^{i \theta} \in \bT$ with $\frac{\theta}{2 \pi} \notin \bQ$. 
Fix $\varepsilon$ and the corresponding $\delta$ from Theorem \ref{thm:main2}. 
If $r \in \bT$ satisfies $|q-r| < \delta$, then every $r$-commuting pair $u,v$ is $\delta$-almost $q$-commuting, therefore there is a $q$-commuting pair $u_q, v_q$ such that 
\[
\|u \otimes \mathrm{id}_{\ell^2} - u_q\| + \|v \otimes \mathrm{id}_{\ell^2} - v_q\| < 2 \varepsilon. 
\]
It is natural to ask whether the ampliation is necessary. 
As remarked in the introduction, if $r$ is a root of unity and $u,v$ are represented on a finite dimensional space $\bC^n$, then there is no $q$-commuting pair $u_q,v_q$ on $\bC^n$, and in particular no $q$-commuting pair can approximate the unampliated pair $u,v$. 

On the other hand, in some cases, an ampliation is not necessary. 
Consider the pair $S,D_r$, where $S$ be the bilateral shift on $\ell^2(\bZ)$, and $D_r$ is the diagonal operator determined on the standard basis of $\ell^2(\bZ)$ by $D_r e_n = r^n e_n$. 
The C*-algebra generated by $S$ and $D_r$ contains no nonzero compacts, therefore Voiculescu's theorem (see \cite[Corollary II.5.6]{DavBook}) implies that the pair $S,D_r$ is approximately unitarily equivalent to its infinite ampliation. 
Invoking Theorem \ref{thm:main2}, we find that if $|q-r|<\delta$ then there is a pair of $q$-commuting unitaries $u_q,v_q \in B(\ell^2(\bZ))$ such that 
\[
\|S - u_q\| + \|D_r - v_q\| < 2 \varepsilon. 
\]
\end{example}

\ 

\

\noindent {\bf Acknowledgements.} We are grateful for enlightening correspondences with Ken Davidson, Adam Dor-On, Terry Loring and Huaxin Lin. 

\bibliographystyle{myplainurl}
\bibliography{GS}

\linespread{1}
\setlength{\parindent}{0pt}

\end{document}